\documentclass[12pt]{amsart}
\usepackage{amscd,amsmath,amssymb,amsfonts}
\usepackage{mathtools}
\usepackage{xcolor}
\usepackage[all]{xy}
\usepackage{hyperref}
\usepackage{url}
\usepackage{stmaryrd}
\usepackage{color}

\usepackage{tikz}
\usetikzlibrary{matrix,arrows,decorations.pathmorphing}

\theoremstyle{plain}
\newtheorem{thm}{Theorem}
\newtheorem{lem}[thm]{Lemma}
\newtheorem{cor}[thm]{Corollary}
\newtheorem{prop}[thm]{Proposition}

\newtheorem{defn}[thm]{Definition}

\theoremstyle{definition}

\newtheorem{claim}[thm]{Claim}

\numberwithin{thm}{section}
\renewcommand{\tilde}{\widetilde}

\newcommand{\ml}[2]{\begin{multline}\label{#1}#2 \end{multline}}
\newcommand{\ga}[2]{\begin{gather}\label{#1}#2 \end{gather}}

\newcommand{\surj}{\twoheadrightarrow}

\newcommand{\sA}{{\mathcal A}}
\newcommand{\sB}{{\mathcal B}}

\newcommand{\sK}{{\mathcal K}}
\newcommand{\sL}{{\mathcal L}}
\newcommand{\sM}{{\mathcal M}}

\newcommand{\sO}{{\mathcal O}}

\newcommand{\C}{{\mathbb C}}

\newcommand{\F}{{\mathbb F}}

\newcommand{\N}{{\mathbb N}}

\newcommand{\Q}{{\mathbb Q}}
\newcommand{\R}{{\mathbb R}}

\newcommand{\Z}{{\mathbb Z}}

\numberwithin{equation}{section}

\begin{document}

\title[ Finite presentation]{Finite presentation of  the tame fundamental group  }
\author{H\'el\`ene Esnault, Mark Shusterman, \and Vasudevan Srinivas}
\address{Freie Universit\"at Berlin, Arnimallee 3, 14195, Berlin,  Germany}
\email{esnault@math.fu-berlin.de}
\address{Harvard University, Mathematics, 1 Oxford Street, Cambridge, MA 02138, USA}
\email{mshusterman@math.harvard.edu}
\address{   TIFR, School of Mathematics \\
Homi Bhabha Road \\
400005 Mumbai, India}
\email{ srinivas@math.tifr.res.in}
\thanks{The third author was supported during part of the preparation of the article by a J. C. Bose Fellowship of the Department of Science and Technology, India. He also acknowledges support of the Department of Atomic Energy, India under project number RTI4001.}
\subjclass{14F35, 11S15}

\begin{abstract}  

Let $p$ be a prime number, and let $k$ be an algebraically closed field of characteristic $p$.
We show that the tame fundamental group of a smooth affine curve over $k$ is a projective profinite group.
We prove that the fundamental group of a smooth projective variety over $k$ is finitely presented; more generally, the tame
fundamental group of a smooth quasi-projective variety over $k$, which admits a
good compactification, is finitely presented.

\end{abstract}

\maketitle
\section{Introduction} 

The aim of this work is to study analogs  
 in characteristic $p > 0$ of some classical properties of the  topological and \'etale fundamental groups of varieties over $\mathbb{C}$.  Our main object of study for fundamental groups is the property of being finitely presented. 

If  $X$ is a smooth connected algebraic variety  over $\mathbb{C}$, the topological space on its complex points 
 $X(\mathbb{C})$ is   homotopy equivalent to a finite CW-complex, see \cite{Mor78}.
As a result, the fundamental group $\pi_1(X(\C))$ (based at some point which is irrelevant for the discussion) admits a finite presentation.
Consequently the \'{e}tale fundamental group $\pi_1^{\text{\'{e}t}}(X)$, which by the Rieman existence theorem  is continuously isomorphic to the profinite completion 
\begin{equation}
\widehat{\pi_1(X(\C))} = \varprojlim_{\substack{H \lhd \pi_1(X(\C)), \ [\pi_1(X(\C)) : H] < \infty}} \pi_1(X(\C))/H  \notag
\end{equation} 
of $\pi_1(X(\C))$, is a finitely presented profinite group.
The existence of a finite presentation for \'{e}tale fundamental groups of smooth connected varieties over other algebraically closed fields of characteristic $0$ is deduced from this.

In this work, we consider the possibility of obtaining analogous finite presentability results for a smooth connected variety $X$ over an algebraically closed field $k$ of positive characteristic $p$.

We  work throughout with profinite groups, so all group-theoretic notions should be understood in a topological sense.
For instance, all subgroups are closed, and all homomorphism are continuous.
Our notation for a closed (respectively, open) subgroup $U$ of a profinite group $\pi$ is $U \leq_c \pi$ (respectively, $U \leq_o \pi$).
In this vein, a generating set $S$ of a profinite group $\pi$ is a subset that generates a dense subgroup, so $\pi$ is (topologically) finitely generated if it admits a finite generating set $S$ in this (topological) sense.

We  use free profinite groups of finite rank, namely profinite completions of free groups on finitely many letters.
The aforementioned finite generation of a profinite group $\pi$ can then be restated as the existence of an epimorphism
$
\varphi \colon F \to \pi
$
from a free profinite group $F$ of finite rank onto $\pi$. If we can find such a surjection $\varphi$ with $\mathrm{Ker}(\varphi)$ finitely generated as a normal subgroup of $F$, we say that $\pi$ is (topologically) finitely presented (see \cite[Introduction]{Lub01}).

On a smooth connected variety $X$ over an algebraically closed field $k$ of characteristic $p>0$, 
the choice of a geometric point $x \in X$ enables one to define the \'{e}tale fundamental group $\pi_1^{\text{\'{e}t}}(X,x)$, which is a profinite group, see \cite[Expos\'e~V]{SGA1}. 
The isomorphism class of this group does not depend on the choice of $x$, so we will omit $x$ from our notation. 
Furthermore, since we only work with \'{e}tale  fundamental groups, we shall abbreviate $\pi_1^{\text{\'{e}t}}(X,x)$ by $\pi_1(X)$.

As is well known,  if $X$ is not proper, $\pi_1(X)$ is not necessarily finitely generated. For example, for each natural number $s$, and a choice of $s$ natural numbers $m_i$ prime to $p$ and pairwise different, 
  the Artin-Schreier covers $y^p - y = z^{m_i}$ of the affine line $\mathbb{A}^1_k =\mathrm{Spec}(k[z])$  yield a surjection
$\pi_1(\mathbb{A}^1_k )\surj  \oplus_{i=1}^s \Z/p\Z$.
%Considering as an example  $X = \mathbb{A}^1_k = \mathrm{Spec}(k[z])$, we recall that $\pi_1(X)$ admits a direct product of $s$ copies of $\mathbb{Z}/p\mathbb{Z}$ as a quotient for each integer $s \geq 0$,
%so $\pi_1(X)$ is not even finitely generated.
%These quotients arise from the Artin-Schreier covers $y^p - y = z^m$ of $\mathbb{A}^1_k$ for positive integers $m$ not divisible by $p$.
In order to have finite generation one has to restrict (in some sense) the ramification at points on a compactification of $X$,
so  to  restrict attention to an appropriate quotient of the fundamental group.  

We consider several quotients/completions of our profinite groups.
For a profinite group $\pi$, and a prime number $\ell$, we denote by
\begin{equation}  
\pi^{(\ell)} = \varprojlim_{\substack{U \lhd_o \pi, \ [\pi : U] \ \text{is a power of} \ \ell }} 
 \pi/U  \notag
\end{equation}
the maximal pro-$\ell$ quotient of $\pi$, which is also called the pro-$\ell$ completion of $\pi$.
Similarly, we denote by 
\begin{equation}
\pi^{(\ell')} = \varprojlim_{\substack{U \lhd_o \pi, \ 
  [\pi : U] \ \text{is not divisible by} \ \ell }} \pi/U \notag
\end{equation}
the maximal pro-prime-to-$\ell$ quotient of $\pi$, which is also called the prime-to-$\ell$ completion of $\pi$.

In this work, our main way of restricting ramification is to consider
tame fundamental groups.
For simplicity, we discuss here the case of curves, and refer to Section~\ref{sec:pprime} for the general case. Let $X\hookrightarrow \bar X$ be a smooth projective compactification of a smooth connected quasi-projective curve. 
Recall that we have fixed a geometric point $x$, thus a universal cover $\tilde X\to X$ based at $x$ which identifies $\pi_1(X,x)$ with ${\rm Aut}(\tilde X/X)$.  We drop now $x$ from the notation.  The tame fundamental group $\pi_1^t(X)$ is the quotient of $\pi_1(X)$ defined by 
\ga{}{ \pi_1^t(X)= \varprojlim_Y {\rm Aut}(Y/X) \notag}
where $\tilde X\to Y\to X$ is an intermediate cover, $Y$ is connected and $Y\to X$ is finite tame.
Tameness is a condition on each point on a normal compactification $Y$: an extension of discretely valued rings is tame if the ramification index is prime to $p$ and the residue field extension is separable (\cite[Definition 2.1.2, p.30]{GM71}).
By \cite[Theorem 10.1.6]{NSW13}, the (profinite) group $\pi_1^t(X)$ is finitely generated. We have the surjections
$$\pi_1(X)\surj \pi_1^t(X)\surj \pi_1(X)^{(p')}$$ and the prime-to-$p$-completion of $\pi_1^t(X)$ is $\pi_1(X)^{(p')}$.  Moreover, if $X$ is proper, the surjection $\pi(X)\to \pi_1^t(X)$ is an isomorphism (by definition).

In higher dimension, the Lefschetz theorem and Grothendieck's specialization theorem are used to show that $\pi^t_1(X)$ is finitely generated  (see e.g. \cite[Corollary 7.2]{Esn17},  \cite[Theorem~1.1 (a)]{EK16}).

The main theorem of our note is the following.

\begin{thm} \label{thm:main}

Let $X$ be a smooth connected quasi-projective variety defined over an algebraically closed field $k$ of characteristic $p>0$.   
If $X$ admits a smooth projective compactification $j: X\hookrightarrow \bar X$ such that $\bar X\setminus X$ is a normal crossings divisor, then $\pi_1^t(X)$ is finitely presented.
\end{thm}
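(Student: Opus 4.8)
The plan is to convert the statement into a cohomological bound and then establish that bound by induction on $\dim X$, using the projectivity of the tame fundamental group of curves (asserted in the abstract) as the engine. Since $\pi_1^t(X)$ is already known to be finitely generated, by Lubotzky's criterion \cite{Lub01} it suffices to exhibit a uniform bound
\[
\sup_{\ell,\, M} \frac{\dim_{\F_\ell} H^2(\pi_1^t(X), M)}{\dim_{\F_\ell} M} < \infty,
\]
the supremum taken over all primes $\ell$ and all finite simple $\F_\ell[\pi_1^t(X)]$-modules $M$. Thus the entire problem becomes one of controlling the second cohomology of $\pi_1^t(X)$, and the good compactification $\bar X$ with its normal crossings boundary $D = \bar X \setminus X$ is what will let me set up the geometry needed to do so.

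For a smooth affine curve $C$ the group $\pi_1^t(C)$ is finitely generated and, by the projectivity statement, has cohomological dimension $\le 1$; hence $H^2(\pi_1^t(C), M) = 0$ for every finite module $M$ and the bound above holds trivially with supremum $0$. This is the base case $d = 1$ of the induction and is the ultimate source of all the cohomological vanishing below; crucially, since projectivity of tame fundamental groups of curves holds over an arbitrary algebraically closed field, it will remain available for curves arising over the function fields that appear in the inductive step.

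For the inductive step I would take $d = \dim X \ge 2$ and choose a Lefschetz pencil on $\bar X$ adapted to $D$; after blowing up its axis one obtains a morphism $f\colon X' \to \P^1$ whose general fibers are smooth hyperplane sections. Restricting over the open locus $U \subseteq \P^1$ where the fiber is smooth and meets $D$ transversally produces a family of smooth $(d-1)$-folds $F$, each inheriting a good compactification from $\bar X$, so that the inductive hypothesis applies to $F$. After comparing $\pi_1^t(X)$ with the tame fundamental group of the total space of this family, the relevant homotopy sequence reads
\[
\pi_1^t(F) \longrightarrow \pi_1^t(X) \longrightarrow \pi_1^t(U) \longrightarrow 1,
\]
where $\pi_1^t(U)$ is the tame fundamental group of an affine curve, hence projective with ${\rm cd} \le 1$. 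Writing $N$ for the kernel and feeding the extension into the Hochschild--Serre spectral sequence
\[
H^p\big(\pi_1^t(U),\, H^q(N, M)\big) \Longrightarrow H^{p+q}(\pi_1^t(X), M),
\]
the vanishing $H^p(\pi_1^t(U), -) = 0$ for $p \ge 2$ leaves only $H^0\big(\pi_1^t(U), H^2(N,M)\big)$ and $H^1\big(\pi_1^t(U), H^1(N,M)\big)$ contributing to $H^2(\pi_1^t(X), M)$. Once $N$ is identified with $\pi_1^t(F)$ for $F$ of dimension $d-1$, the inductive hypothesis together with Lubotzky's criterion bounds $\dim H^{\le 2}(N, M)$ linearly in $\dim M$, and since $\pi_1^t(U)$ is finitely generated these bounds propagate through $H^{\le 1}(\pi_1^t(U), -)$, yielding the desired uniform bound on $H^2(\pi_1^t(X), M)$. (Equivalently, one may phrase the conclusion through the closure of finite presentation under extensions of profinite groups.)

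The hard part will be the construction and exactness of the tame homotopy sequence in this non-proper setting. Two points require genuine work: first, comparing $\pi_1^t(X)$ with $\pi_1^t$ of the total space of the family over $U$, which means controlling the effect of the blow-up of the axis and of the fibers discarded in passing to $U$; second, and most importantly, establishing left exactness, i.e.\ identifying the kernel $N$ with the full projective group $\pi_1^t(F)$ rather than a mere quotient of it. This identification cannot be avoided: a proper quotient of the projective fiber group could acquire uncontrolled $H^2$, since inflation is not injective in degree two, so the truncation of the spectral sequence genuinely relies on $N$ itself having ${\rm cd} \le 1$. Throughout one must also guarantee tameness both along the fibers and along $D$, and verify that all cohomological bounds are uniform over every finite simple module and every prime $\ell$, including $\ell = p$, so that Lubotzky's criterion applies as stated.
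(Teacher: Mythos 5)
Your reduction to Lubotzky's criterion and your base case (projectivity of $\pi_1^t$ of affine curves, hence vanishing $H^2$) match the paper's starting point, but the engine of your induction --- a short exact tame homotopy sequence $1 \to \pi_1^t(F) \to \pi_1^t(X'_U) \to \pi_1^t(U) \to 1$ for a Lefschetz pencil --- is a genuine gap, and it is exactly the step you flag as ``the hard part'' while offering no way to prove it. Two distinct things are missing. First, even exactness in the middle of the \'etale homotopy sequence is only available for \emph{proper} separable fibrations (SGA~1, Expos\'e~X); your family over $U$ has non-proper total space and, when $D \neq \emptyset$, non-proper fibers, so already identifying the kernel $N$ with a \emph{quotient} of $\pi_1^t(F_{\bar\eta})$ would require a new theorem. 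Second, and fatally, the left exactness you need (so that $N$ has controlled $H^2$) is the profinite shadow of $\pi_2(U)=0$: over $\C$ it comes from asphericity of open curves, but exactness on the left is precisely what profinite completion does not preserve, and no \'etale substitute exists, least of all in characteristic $p$ for tame quotients. Since inflation is not injective on $H^2$ --- as you yourself observe --- a quotient kernel leaves $H^2(N,M)$ uncontrolled, and your own remark that this ``cannot be avoided'' means the approach has no route around its missing step. There is also a secondary gap: your base case covers only affine curves, but in the central case of the theorem, $X$ projective ($D=\emptyset$), the pencil has smooth projective curve fibers, and $\pi_1$ of a projective curve is \emph{not} a projective profinite group ($H^2(\pi_1(E),\F_\ell)\neq 0$ for an elliptic curve $E$), so the induction does not even start there without importing the projective-curve case, e.g.\ from \cite{Shu18}.

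This is also why the paper proceeds differently: it never fibers and never analyzes a kernel. It reduces to dimension $2$ via the Lefschetz theorem for tame fundamental groups \cite[Theorem~1.1~b)]{EK16}, which gives an \emph{isomorphism} $\pi_1^t$ of a good ample surface section with $\pi_1^t(X)$, and then bounds $H^2(\pi_1^t(X),M)$ not by group-theoretic d\'evissage but by injecting it into the cohomology of the variety: into $H^2(X,M)$ when $\mathrm{char}\,\F \neq p$ (Corollary~\ref{cor:inj}, plus alterations, purity, Artin vanishing and Deligne's Euler-characteristic formula), and into $H^2(\bar X, j_*\underline{M})$ when $\F=\F_p$ (numerical tameness and Proposition~\ref{prop:numtame}, then an Artin--Schreier sequence and Serre duality on the surface $\bar X$). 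That comparison with the cohomology of $X$, rather than left exactness of a homotopy sequence, is the ingredient your plan lacks; if you want to salvage an induction on dimension, it would have to be rebuilt around such a comparison.
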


If $X$ is smooth projective then  Theorem~\ref{thm:main}   says that $\pi_1(X)$ is finitely presented. 
In case $X$ is a smooth projective curve, Theorem~\ref{thm:main} 
is a consequence of \cite[Corollary 1.2]{Shu18}, which also gives a similar result over finite fields, including a determination of the number of relations in a presentation.
Obtaining (a good bound on) the number of relations in higher dimension remains a challenge.

If $X$ is a smooth connected variety defined over an algebraically closed field, we know 
by  Mich\`ele
Raynaud  (\cite[Th\'eor\`eme~2.3.1, Remarque~2.3.2]{Ray72}) under the assumption of strong desingularization
(\cite[I 3.1.5]{SGA5}) and by Orgogozo in general (\cite[Th\'eor\`eme~4.11]{Org03}) that $\pi_1(X)^{(p')}$ is finitely presented.  The proof relies on \'etale descent (\cite[Expos\'e IX, Th\'eor\`eme~4.10]{SGA1})  for $\pi_1(X)^{(p')}$  to reduce to a smooth projective normal crossings divisor compactification, 
then in this situation on the Lefschetz theorem for $\pi_1(X)^{(p')}$ (\cite[Remarque~4.3]{Ray74}) and for smooth surfaces on a Lefschetz pencil method. As  a descended finite \'etale cover of a tame finite \'etale cover is not necessarily tame, we can  not make the reduction to a good compactification case in Theorem~\ref{thm:main}. Our method also gives another proof of the finite presentation of $\pi_1(X)^{(p')}$, see Corollary~\ref{cor:primetop}.

For $X$ a curve, by \cite[Expos\'e XIII, Corollaire~2.12]{SGA1}, the group $\pi_1^{(p')}(X)$ is described by the presentation
\begin{equation}
\langle x_1, \dots, x_{2g}, y_1, \dots ,y_n : [x_1,x_2] \cdots [x_{2g-1}, x_{2g}] y_1 \cdots y_n = 1 \rangle
\notag
\end{equation}
where $g$ is the genus of $X$, $n$ is the number of points on a compactification of $X$ that are not on $X$, and $[a,b] = aba^{-1}b^{-1}$ is the commutator.
This presentation is obtained by lifting $X$ to characteristic $0$.
Theorem \ref{thm:main}  cannot be proven in this way since certain quasi-projective varieties do not lift to characteristic $0$.

Recall that a profinite group $\pi$ is called projective if for every diagram
\ga{}{\xymatrix{ &\ar@{.>}[dl]_{\exists \theta} \pi \ar[d]^{\lambda} \\
A \ar[r]_{\varphi} & B
} \notag
}
 where $\varphi$ and $\lambda$ are surjective homomorphisms of profinite groups, there exists a homomorphism $\theta \colon \pi \to A$
 making the diagram commute. A diagram as above is called an embedding problem, and the map $\theta$ is called a weak solution.
By  \cite[Proposition 7.6.7]{RZ10} we know  that $\pi$ is projective if and only if its cohomological dimension is at most $1$.
By \cite[Lemma 7.6.3]{RZ10}, projectivity is also equivalent to being isomorphic to a subgroup of a free profinite group.
For a prime number $\ell$, being a projective pro-$\ell$ group is equivalent to being a free pro-$\ell$ group by \cite[Theorem 7.7.4]{RZ10}. 
Every finitely generated projective profinite group is finitely presented by \cite[Proposition 1.1]{Lub01}.

If $X$ is a smooth curve,  the group $\pi_1^t(X)$ is finitely presented by Theorem~\ref{thm:main}.  
We strengthen this in the affine case.

\begin{thm} \label{thm:proj}

Let $X$ be a smooth connected affine curve defined over an algebraically closed field $k$ of characteristic $p$.
Then $\pi^t_1(X)$ is projective.

\end{thm}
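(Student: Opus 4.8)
The plan is to prove projectivity of $\pi_1^t(X)$ by showing that its cohomological dimension is at most $1$, using the criterion recalled above that a profinite group is projective if and only if $\mathrm{cd} \le 1$, equivalently if and only if every embedding problem admits a weak solution. Since $\mathrm{cd}(\pi_1^t(X)) = \sup_\ell \mathrm{cd}_\ell(\pi_1^t(X))$, it suffices to bound each $\ell$-cohomological dimension separately, and after reducing to minimal normal kernels this amounts to solving every embedding problem
\[
\xymatrix{ & \pi_1^t(X) \ar[d]^{\lambda} \\ A \ar[r]_{\varphi} & B }
\]
in which $B$ is finite, $\lambda$ is surjective (so that $\lambda$ classifies a connected tame Galois cover $Y \to X$ with group $B$), and $M := \ker(\varphi)$ is a simple $\F_\ell[B]$-module. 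Concretely, a weak solution is the same datum as a lift of the tame cover $Y \to X$ to a tame cover with Galois group $A$. I would split the argument according to whether $\ell \ne p$ or $\ell = p$.

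For $\ell \ne p$ the kernel $M$ has order prime to $p$, and an $A$-cover lifting $Y$ is built from an $M$-cover of $Y$ that is tamely ramified over $\bar X$ and compatible with the $B$-action. Such covers are controlled by the prime-to-$p$ fundamental group of the affine curve $Y$, which is a free prime-to-$p$ profinite group, and hence has $\mathrm{cd}_\ell \le 1$; therefore the relevant degree-$2$ obstruction vanishes and the embedding problem is solvable. This reflects the fact, recalled in the introduction, that the prime-to-$p$ completion of $\pi_1^t(X)$ is the free prime-to-$p$ group $\pi_1(X)^{(p')}$.

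For $\ell = p$ the essential point is that tameness severely constrains $p$-power covers: a connected tame cover $Y \to X$ of $p$-power degree has inertia of order prime to $p$ over every point of $\bar X \setminus X$, hence trivial inertia there, so it is unramified over the boundary and extends to a finite étale cover of the smooth projective model $\bar Y$ of $Y$. Thus the $p$-part of the embedding problem is governed not by $X$ but by the projective curve $\bar Y$, whose maximal pro-$p$ quotient $\pi_1(\bar Y)^{(p)}$ is a free pro-$p$ group by Shafarevich's theorem. Freeness gives $H^2\bigl(\pi_1(\bar Y)^{(p)}, M\bigr) = 0$, which I would use to produce the required equivariant étale $M$-cover of $\bar Y$, restrict it to $Y$, and assemble it with the $B$-action into the sought $A$-cover, thereby solving the embedding problem when $\ell = p$.

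The hard part will be this $p$-primary case, and specifically the bookkeeping needed to pass rigorously between the tame cover $Y \to X$ and finite étale covers of its compactification $\bar Y$ while preserving the $B$-equivariance that encodes the extension class of $A$ over $B$. A related subtlety is that the naive comparison between the cohomology of $\pi_1^t(X)$ and the étale cohomology of $X$ (which has cohomological dimension $1$, since $X$ is an affine curve) does not by itself settle the $p$-coefficient case, because wild Artin--Schreier--Witt classes on the universal tame cover obstruct the collapse of the relevant Hochschild--Serre spectral sequence; it is precisely the reduction to the projective curves $\bar Y$, together with the freeness of $\pi_1(\bar Y)^{(p)}$, that circumvents this. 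Finally, I would need the estimate to be uniform over all open subgroups of $\pi_1^t(X)$ — that is, over all tame covers $Y$ and hence over all the projective models $\bar Y$ — so that the Sylow pro-$\ell$ subgroups come out free for every prime $\ell$, yielding $\mathrm{cd}(\pi_1^t(X)) \le 1$ and thus projectivity.
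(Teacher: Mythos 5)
Your proposal is correct and follows essentially the same route as the paper: the decisive points --- that a tame cover of $p$-power degree is unramified along the boundary, so that $\pi_1^t(Y)^{(p)}\cong\pi_1(\bar Y)^{(p)}$ is free pro-$p$ by Shafarevich's theorem, that the primes $\ell\neq p$ are handled by freeness of the corresponding prime-to-$p$ (pro-$\ell$) quotients, and that applying this uniformly over all open subgroups $U\cong\pi_1^t(Y)$ exhibits each $\ell$-Sylow subgroup as an inverse limit of free pro-$\ell$ groups, hence free, giving $\mathrm{cd}\le 1$ --- are exactly the ingredients of the paper's proof (Propositions~\ref{FreeQuotientsOfTameFundamentalGroup} and~\ref{ProEllFreenessCor} and the Sylow argument). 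The embedding-problem packaging in your write-up is just a rephrasing of projectivity; the mathematical substance coincides.
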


This result is a variant of the folklore fact that $\pi_1(X)$ is projective, see for instance \cite[Theorems 10.1.2, 10.1.12]{NSW13}. 
Our proof proceeds along similar lines, one difference being the need to invoke a result of Shafarevich, saying that the maximal pro-$p$ quotient of the fundamental group of a smooth compactification $\bar X$ of $X$ is a free pro-$p$ group. 

We  enhance Theorem~\ref{thm:proj} by introducing the notion due to Drinfeld of ramification bounded by a given finite \'etale connected cover 
$h: X'\to X$. Under the assumption that $h$ is saturated (see Definition~\ref{defn:drinfeld}), there is  a naturally defined  quotient $\pi_1^h(X)$ of $\pi_1(X)$ with the property that a Galois cover $Y \to X$ has ramification bounded by $h$ precisely when the surjection $\pi_1(X) \to \mathrm{Aut}(Y/X)$ factors through $\pi_1^h(X)$.  
The quotient $\pi_1(X)\to \pi_1^t(X)$ factors through $\pi_1^h(X)$ and is equal to it if $h$ is an isomorphism. 
We prove in Theorem~\ref{thm:Drinfeld} that for a smooth connected affine curve $X$ over an algebraically closed field of positive characteristic, the group $\pi_1^h(X)$  is finitely presented.

 \medskip

We shall now discuss the methods we use. 
The starting point of our proof of Theorem~\ref{thm:main} is the following characterization of finite presentability taken from \cite[Theorem 0.3]{Lub01}.

\begin{thm}[Lubotzky] \label{thm:lub}
Let $\pi$ be a finitely generated profinite group.
Then $\pi$ is finitely presented if and only if there is a constant $C\in \R_{\ge 0}$ such that for any $r$-dimensional linear representation $M$ of $\pi$  over a prime finite field $\F$, one has 
${\rm dim}_{\F} H^2(\pi, M) \leq C \cdot r.$ \notag

\end{thm}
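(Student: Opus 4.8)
I will reconstruct Lubotzky's argument as follows. The plan is to translate both finite presentation and the cohomological bound into statements about a single object, the relation module, and to compare them through the five-term exact sequence. Since $\pi$ is assumed finitely generated, I would fix a \emph{minimal} presentation $1 \to R \to F \to \pi \to 1$ with $F$ free profinite of rank $d = d(\pi)$, and recall that, by the definition of finite presentation recorded in the introduction, $\pi$ is finitely presented exactly when $R$ is finitely generated as a closed normal subgroup of $F$. A profinite Frattini/Nakayama argument further reduces this to finite generation of the relation module $\bar R := R^{\mathrm{ab}}$ as a module over the completed group ring $\Lambda = \hat\Z[[\pi]]$, where $\pi$ acts by conjugation. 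The cohomological input is the inflation--restriction sequence attached to this extension: since $F$ is free we have $H^2(F, M) = 0$ for every finite $\pi$-module $M$, so it reads
\[
0 \to H^1(\pi, M) \to H^1(F, M) \to \Hom_\pi(\bar R, M) \to H^2(\pi, M) \to 0 .
\]
Everything is extracted from this one sequence.

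For the direction ``finitely presented $\Rightarrow$ bounded'', suppose $R$ is normally generated by $s$ elements, so that $\bar R$ is generated by $s$ elements over $\Lambda$ and there is a surjection $\Lambda^s \twoheadrightarrow \bar R$. Dualizing gives an injection $\Hom_\pi(\bar R, M) \hookrightarrow \Hom_\pi(\Lambda^s, M) = M^s$, whence, using that $H^2(\pi,M)$ is a quotient of $\Hom_\pi(\bar R,M)$ by the rightmost arrow above,
\[
\dim_\F H^2(\pi, M) \le \dim_\F \Hom_\pi(\bar R, M) \le s \cdot \dim_\F M .
\]
Thus the constant $C = s$ works, and one sees moreover that $C$ may be taken to be the number of relations.

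For the converse, assume $\dim_\F H^2(\pi, M) \le C \cdot \dim_\F M$ for all $M$. Reading the five-term sequence from the right and using $\dim_\F H^1(F, M) = (d-1)\dim_\F M + \dim_\F M^\pi \le d \cdot \dim_\F M$, I would estimate
\[
\dim_\F \Hom_\pi(\bar R, M) \le \dim_\F H^2(\pi, M) + \dim_\F H^1(F, M) \le (C + d)\cdot \dim_\F M .
\]
Applying this with $M$ ranging over all finite simple $\pi$-modules $S$ over all prime fields (a short d\'evissage along short exact sequences, using subadditivity of $\dim_\F H^2$, lets one pass between simple modules and arbitrary ones) yields $\dim_\F \Hom_\pi(\bar R, S) \le (C+d)\dim_\F S$ for every simple $S$. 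The final step is a Nakayama-type lemma: this uniform bound, which measures the multiplicities of $S$ in the semisimple quotient $\bar R / \mathrm{rad}(\Lambda)\bar R$ prime-by-prime, forces $\bar R$ to be finitely generated over $\Lambda$, with number of generators bounded in terms of $C + d$. Unwinding the Frattini reduction then shows $R$ is normally finitely generated, i.e. $\pi$ is finitely presented.

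The main obstacle is this last module-theoretic step. The ring $\Lambda = \hat\Z[[\pi]]$ is neither local nor noetherian, so one must set up Nakayama's lemma for topologically finitely generated profinite $\Lambda$-modules with care, decompose over the primes $\ell$ (working over $\Z_\ell[[\pi]]$ and its residual algebra $\F_\ell[[\pi]]$), and verify that the bound $\dim_{\F_\ell}\Hom_\pi(\bar R, S) \le (C+d)\dim_{\F_\ell} S$ controls the number of generators \emph{uniformly in} $\ell$ via the Wedderburn decomposition of $\Lambda_\ell / \mathrm{rad}$. Equally delicate is the passage from finite generation of the abelianized relation module $\bar R$ back to finite normal generation of $R$ itself; this is where the profinite Frattini argument intervenes, and where one genuinely uses that $\pi$ is finitely generated, so that it has only finitely many open subgroups of each index and hence finitely many simple modules of each order over each $\F_\ell$.
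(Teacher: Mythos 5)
Your statement is one the paper does not prove at all: it is quoted verbatim from Lubotzky \cite[Theorem 0.3]{Lub01}, so your reconstruction has to be measured against Lubotzky's own argument. In outline you follow the same strategy he does: the relation module $\bar R$ of a presentation $1 \to R \to F \to \pi \to 1$, the five-term exact sequence $0 \to H^1(\pi,M) \to H^1(F,M) \to \Hom_\pi(\bar R, M) \to H^2(\pi,M) \to 0$ (valid since $H^2(F,M)=0$ for $F$ free profinite), the computation $\dim_\F H^1(F,M) = (d-1)\dim_\F M + \dim_\F M^\pi$, and the Nakayama-type criterion detecting finite generation of a profinite $\Lambda$-module on its finite simple quotients. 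The direction ``finitely presented $\Rightarrow$ bounded'' is complete and correct as you wrote it, and your reduction of the converse to the statement ``$\bar R$ is finitely generated over $\Lambda$'' is also sound (the parenthetical d\'evissage is unnecessary there, since the hypothesis already covers all modules, not just simple ones).

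The genuine gap is the step you dispatch as ``unwinding the Frattini reduction then shows $R$ is normally finitely generated.'' No Frattini argument can do this. If $r_1,\dots,r_s \in R$ have images generating $\bar R$ as a $\Lambda$-module and $N$ is their closed normal closure in $F$, then the image of $N$ in $\bar R$ is exactly the $\Lambda$-submodule generated by those images; so the hypothesis on the $r_i$ is \emph{equivalent} to $R = N\,\overline{[R,R]}$, i.e.\ to the profinite group $Q = R/N$ being perfect --- and nothing more. For pro-$p$ groups perfection forces triviality, because $\overline{[Q,Q]Q^p}$ lies in the Frattini subgroup $\Phi(Q)$; this is the setting where Frattini/Nakayama arguments are legitimate. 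For general profinite groups it does not: a non-abelian finite simple group $S$ (or a closed product of such) is perfect with $\Phi(S)=1$, and such semisimple chief factors are invisible both to the relation module and to the Frattini subgroup. Consequently $R = N\,\overline{[R,R]}$ does not yield $R=N$, and a badly chosen module-generating tuple may well have normal closure strictly smaller than $R$. The assertion you actually need --- that for profinite groups finite generation of $\bar R$ over $\Lambda$ implies finite normal generation of $R$ (``no relation gap'') --- is true, but it is the hard core of Lubotzky's theorem, not a formal reduction: its proof passes to finite quotients and handles the non-abelian chief factors by finite group theory (Gasch\"utz's lemma and counting arguments over crowns), showing the normal generators can be chosen, with boundedly many added, so as to kill the perfect part as well. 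Until you supply that ingredient, the converse direction of the theorem is unproven.
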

In \cite[Theorem.~0.2]{Lub01} there is a (more refined) cohomological expression for the number of relations of $\pi$, but we do not use it here.
In order to obtain a bound on the growth of $H^2(\pi, M)$, linear in $r$, for $\pi=\pi_1^t(X)$
 we use different methods according to the characteristic of $\F$. 
 
  If $\F$ has characteristic prime to $p$, we could rely on Lubotsky's theorem together with the finite generation of $\pi(X)^{(p')}$ to obtain one part of the necessary information. We do not do this, in particular to show how efficient is Lubotsky's theorem: it proves everything at once.  In fact, the linear bound can be shown to exist without assuming the existence of a  smooth normal crossings compactification, see Proposition~\ref{prop:tamel}.
  So assuming $\F$ has characteristic prime to $p$,
  we first compute that the natural map  
$H^2(\pi^t_1(X), M) \to H^2(X, M)$ is injective (see Corollary~\ref{cor:inj}).  Using this and an alteration \`a la de Jong-Gabber-Temkin, we can reduce  the problem to the case where $X$ has a smooth projective compactification with a normal crossings divisor at infinity. 
We can then apply the Lefschetz theorem for the tame fundamental group \cite[Theorem~1.1]{EK16}, thus reducing the problem to surfaces (varieties $X$ of dimension $2$). 

We use purity and Deligne's theorem \cite[Corollaire~2.7]{Ill81} according to which the Euler characteristic of our tame $M$  depends only on the geometry of $X$ and on $r$. This enables us to obtain the linear bound for each $\F_\ell$ separately. 
Showing that the bound can be made independent of $\ell$ relies on the cohomological representation of the zeta function in case $k$ is the algebraic closure of $\F_p$.

If $\F=\F_p$, we need numerical tameness for $\pi^t_1(X)$ (rather than just tameness), a notion introduced in \cite[Section~5]{KS10}, which says that the local inertial groups of the finite quotients of $\pi^t_1(X)$ have order prime to $p$. This condition is fulfilled when $X$ has a smooth compactification $j: X \hookrightarrow \bar X$ with a normal crossings divisor at infinity (\cite[Theorem~5.4 (a)]{KS10}). This is the reason why we need this assumption. This allows us to conclude that  Grothendieck's spectral sequence with computes equivariant cohomology  in the tower defined by the finite quotients of $\pi^t(X)$ degenerates, see \eqref{b}, yielding an inclusion
$H^2(\pi^t_1(X), M)\subset H^2(\bar X, j_*M)$, see Proposition~\ref{prop:numtame}.  Again the Lefschetz theorem {\it loc.cit.} reduces the problem to dimension $2$. 

In dimension $2$, we construct a locally free sheaf $\sM$ on $\bar X$ of rank $r$ which is acted on by the Frobenius $F$ and admits an Artin-Schreier exact sequence which describes $j_*M$ as the $F$-invariants of $\sM$. If $X$ is projective, then $\sM$ is semi-stable of degree $0$. This enables us to find an ample curve which does not depend on $M$ which contains the necessary numerical information.
 If $X$ is not projective, in order for our construction of $\sM$ to be helpful, we need that the degree of $\sM$, which is not positive by construction, is bounded below by a quantity which depends on $M$ only via its rank $r$. 
We deduce this from Abhyankar's lemma. 

For Theorem~\ref{thm:proj}, the inclusion $H^2(\pi^t_1(X), M) \to H^2(X, M)$ above implies $H^2(\pi^t_1(X), M) =0$ by Artin's affine theorem if $\F$ has characteristic not equal to $p$ and by \cite[Theorem 2]{Sha47} if $\F=\F_p$.

\medskip

{\it Acknowledgement}: After posting a first draft of our article, we received various comments, notably on references. We warmly thank Mikhail Borovoi, Luc Illusie and Fabrice Orgogozo for their kind help.

\section{The tame fundamental group of a smooth affine curve}

\begin{prop} \label{FreeQuotientsOfTameFundamentalGroup}

Let $Y$ be a smooth connected affine curve over an algebraically closed field $k$ of characteristic $p >0$, and let $\ell$ be a prime number. 
Then the maximal pro-$\ell$ quotient $\pi^t_1(Y)^{(\ell)}$ of the tame fundamental group $\pi_1^t(Y)$  of $Y$ is a free pro-$\ell$ group.

\end{prop}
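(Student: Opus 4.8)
The plan is to treat the primes $\ell \neq p$ and $\ell = p$ separately, in each case identifying $\pi^t_1(Y)^{(\ell)}$ with a group already known to be free pro-$\ell$. Write $\bar Y$ for the smooth projective model of $Y$, and $S = \bar Y \setminus Y$ for the (nonempty, since $Y$ is affine) finite set of points at infinity; let $g$ be the genus of $\bar Y$ and $n = |S| \geq 1$.

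For $\ell \neq p$, a pro-$\ell$ group is pro-prime-to-$p$, so the maximal pro-$\ell$ quotient factors through the prime-to-$p$ completion; since the latter is $\pi_1(Y)^{(p')}$ (as recalled in the introduction), I get $\pi^t_1(Y)^{(\ell)} = \bigl(\pi_1(Y)^{(p')}\bigr)^{(\ell)}$. By the presentation of $\pi_1(Y)^{(p')}$ recalled in the introduction, the single relation $[x_1,x_2]\cdots[x_{2g-1},x_{2g}] y_1 \cdots y_n = 1$ lets one eliminate the generator $y_n$ once $n \geq 1$, exhibiting $\pi_1(Y)^{(p')}$ as the free prime-to-$p$ profinite group of rank $2g + n - 1$. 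The maximal pro-$\ell$ quotient of a free prime-to-$p$ group of rank $d$ is the free pro-$\ell$ group of rank $d$, so $\pi^t_1(Y)^{(\ell)}$ is free pro-$\ell$ of rank $2g + n - 1$.

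For $\ell = p$, the crucial observation is that a connected tame cover of $Y$ with Galois group a $p$-group is unramified over all of $\bar Y$: at each $s \in S$ the inertia is a subgroup of the $p$-group, hence of $p$-power order, yet of order prime to $p$ by tameness, so it is trivial; and an \'etale cover of $Y$ unramified at the finitely many points of $S$ extends, by purity (or elementarily, via normalization of $\bar Y$ in the covering function field), to an \'etale cover of $\bar Y$. Conversely, every \'etale cover of $\bar Y$ restricts to an (unramified at infinity, hence tame) cover of $Y$. Passing to the limit over $p$-power quotients, the functorial surjection $\pi^t_1(Y)^{(p)} \surj \pi_1(\bar Y)^{(p)}$ is therefore an isomorphism. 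By Shafarevich's theorem (cited in the introduction), $\pi_1(\bar Y)^{(p)}$ is a free pro-$p$ group, whence so is $\pi^t_1(Y)^{(p)}$.

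The main obstacle is the case $\ell = p$: one must see that tameness together with a $p$-power Galois group forces the cover to be everywhere unramified over $\bar Y$ — that is, that the Artin--Schreier (wild) phenomena which make $\pi_1(Y)$ infinitely generated are exactly what tameness removes — and then invoke the substantive input of Shafarevich's freeness theorem. As an alternative, more uniform route one could instead verify projectivity cohomologically: since for pro-$\ell$ groups being free is equivalent to having cohomological dimension at most $1$ (as recalled in the introduction), it would suffice to prove $H^2(\pi^t_1(Y)^{(\ell)}, \F_\ell) = 0$; for $\ell \neq p$ this follows by composing the inflation $H^2(\pi^t_1(Y)^{(\ell)}, \F_\ell) \inj H^2(\pi^t_1(Y),\F_\ell)$ for the maximal pro-$\ell$ quotient with the comparison injection into $H^2_{\et}(Y, \F_\ell)$ and Artin's affine vanishing theorem, and for $\ell = p$ again from Shafarevich. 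I would nonetheless prefer the structural identifications above, which additionally pin down the ranks.
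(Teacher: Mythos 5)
Your proof is correct and takes essentially the same approach as the paper: the $\ell = p$ case (tameness forces trivial inertia for $p$-power covers, hence $\pi_1^t(Y)^{(p)} \cong \pi_1(\bar Y)^{(p)}$, then Shafarevich's freeness theorem) is identical, and for $\ell \neq p$ the paper's citation of the structure theorem for tame fundamental groups of curves (NSW, Theorem~10.1.6) carries the same presentation-based content that you unpack by hand from the SGA1 presentation. Your explicit elimination of a generator, pinning down the rank $2g+n-1$, is a pleasant refinement rather than a different method.
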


\begin{proof}
 
Choose a smooth compactification $Y\hookrightarrow \bar Y$. 
In case $\ell \neq p$, the proposition follows from \cite[Theorem 10.1.6]{NSW13}.
In general, the continuous homomorphism
$\pi_1^t(Y)\to \pi_1(\bar Y)$ is surjective as if $\bar Z\to \bar Y$ corresponds to a finite quotient $H$ of $\pi_1(\bar Y)$, and admits an intermediate finite cover $\bar Z|_Y\to X\to Y$ in restriction to $Y$, then the normalization $\bar X$ of $X$ in the function field of $\bar Z $ is unramified along $\bar Y\setminus Y$.

We assume next that $\ell = p$. 
Since the tame local inertia at a point in $\bar Y\setminus Y$ is a pro-$p'$-group, 
every finite \'{e}tale tame Galois cover of $Y$ of degree a power of $p$  is unramified along $\bar Y\setminus Y$.
We conclude that  the surjection $\pi_1^t(Y)\to \pi_1(\bar Y)$ induces an isomorphism on
$\pi^t_1(Y)^{(p)} \xrightarrow{\cong} \pi_1(\bar{Y})^{(p)}$.
To finish the proof, we apply \cite[Theorem 2]{Sha47} which says that $\pi_1(\bar Y)^{(p)}$ is a free pro-$p$ group. 
\end{proof}

\begin{prop} \label{ProEllFreenessCor}

Let $X$ be a smooth connected affine curve over an algebraically closed field $k$ of characteristic $p>0$, and let $\ell$ be a prime number. 
For every open subgroup $U$ of $\pi_1^t(X)$, the group $U^{(\ell)}$ is free pro-$\ell$. 

\end{prop}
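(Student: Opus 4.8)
The plan is to reduce this to Proposition~\ref{FreeQuotientsOfTameFundamentalGroup} by recognizing the open subgroup $U$ as the tame fundamental group of a finite tame cover of $X$. By the usual dictionary between open subgroups and covers, an open subgroup $U \leq_o \pi_1^t(X)$ corresponds to a connected finite tame \'etale cover $h \colon X' \to X$, with $[\pi_1^t(X):U]=\deg(h)$. Since $h$ is finite and $X$ is a smooth connected affine curve, $X'$ is again a smooth connected affine curve. So the whole statement would follow once I identify $U$ with $\pi_1^t(X')$: applying Proposition~\ref{FreeQuotientsOfTameFundamentalGroup} to $X'$ then gives that $U^{(\ell)} \cong \pi_1^t(X')^{(\ell)}$ is free pro-$\ell$. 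Note that a purely group-theoretic shortcut is not available, since the freeness of $\pi_1^t(X)^{(\ell)}$ only controls the maximal pro-$\ell$ quotient and does not by itself bound $\mathrm{cd}_\ell(\pi_1^t(X))$, so that $U^{(\ell)}$ cannot be read off directly from $\pi_1^t(X)^{(\ell)}$.

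The heart of the argument is therefore the identification $U \cong \pi_1^t(X')$, and I would deduce it from the transitivity of tameness in a tower of curves. Over the algebraically closed field $k$ every closed point of a smooth curve has residue field $k$, so the residue field conditions in the definition of tameness recalled after \cite[Definition 2.1.2]{GM71} are automatic, and a finite cover of smooth curves is tame exactly when the ramification indices over the boundary are prime to $p$. For a tower $X'' \to X' \xrightarrow{h} X$ of connected finite \'etale covers with $h$ tame, multiplicativity of ramification indices shows that $X'' \to X$ is tame if and only if $X'' \to X'$ is tame. Thus composition with $h$ sets up a bijection between connected finite tame \'etale covers of $X'$ and connected finite tame \'etale covers of $X$ dominating $X'$; translated into group theory, this says precisely that the functorial map $h_* \colon \pi_1^t(X') \to \pi_1^t(X)$ is injective with image $U$.

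The main obstacle I anticipate is making this identification $U \cong \pi_1^t(X')$ fully rigorous, rather than the final cohomological input, which is immediate once the identification is in hand. In particular one must check that pushing a tame cover of $X'$ forward along $h$ again yields a cover that is tame \emph{over $X$} (so that $h_*$ is well defined with image in $U$), and conversely that a tame cover of $X$ factoring through $h$ is tame \emph{over $X'$}; both directions rest on the multiplicativity of ramification indices at the boundary, with residue-field separability costing nothing since all residue fields are equal to $k$.
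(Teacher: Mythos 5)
Your proposal is correct and takes essentially the same approach as the paper: the paper likewise identifies $U$ with $\pi_1^t(Y)$, where $Y \to X$ is the finite tame cover corresponding to $U$, deduces this identification from transitivity of tameness in towers (stated there as the fact that for discretely valued rings, if $R\hookrightarrow S$ is tame then $R\hookrightarrow T$ is tame if and only if $S\hookrightarrow T$ is), and then applies Proposition~\ref{FreeQuotientsOfTameFundamentalGroup} to $Y$. Your justification via multiplicativity of ramification indices, with separability of residue extensions automatic over the algebraically closed field $k$, is just a specialization of that same statement to curves.
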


\begin{proof}
Let $X\hookrightarrow \bar X$ be a smooth projective compactification. 
An open subgroup $U \subset \pi_1^t(X)$ corresponds to a finite \'{e}tale cover $Y$ of $X$, tamely ramified along $\bar X\setminus X$.  The following claim reduces Proposition~ \ref{ProEllFreenessCor} to 
Proposition~\ref{FreeQuotientsOfTameFundamentalGroup}.
\end{proof}

\begin{claim}
$U \cong \pi_1^t(Y)$.

\end{claim}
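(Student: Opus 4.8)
The plan is to realize $U$ as the tame fundamental group of $Y$ by comparing universal tame pro-covers. Write $\tilde X^t \to X$ for the universal tame pro-cover, so that $\pi_1^t(X) = {\rm Aut}(\tilde X^t/X)$ and, by the usual Galois correspondence for the tame tower, the open subgroup $U$ is exactly the stabilizer ${\rm Aut}(\tilde X^t/Y)$ of the chosen lift of the base point to $Y$. It therefore suffices to prove that $\tilde X^t$, viewed as a connected pro-cover of $Y$, is the universal tame pro-cover of $Y$; granting this, ${\rm Aut}(\tilde X^t/Y) = \pi_1^t(Y)$ and the claim follows. Here tameness of a cover of $Y$ is measured along the boundary $\bar Y\setminus Y$ of the smooth compactification $\bar Y$, which is the normalization of $\bar X$ in the function field of $Y$; since $k$ is algebraically closed, all residue extensions are trivial, so tameness reduces to the condition that every relevant ramification index be prime to $p$.

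First I would check that $\tilde X^t$ is a tame pro-cover of $Y$, that is, that every intermediate connected finite cover $\tilde X^t \to W \to Y$ is tamely ramified along $\bar Y\setminus Y$. Such a $W$ is a subcover of $\tilde X^t\to X$, hence $W\to X$ is finite \'etale and tame along $\bar X\setminus X$. For a point of $\bar W$ over $\bar Y\setminus Y$ (and so over $\bar X\setminus X$), multiplicativity of ramification indices in the tower $\bar W \to \bar Y \to \bar X$ gives $e_{W/Y}\mid e_{W/X}$, and the latter is prime to $p$; thus $W\to Y$ is tame. Consequently the universal property of $\pi_1^t(Y)$ produces a map $\tilde Y^t \to \tilde X^t$ over $Y$, where $\tilde Y^t$ denotes the universal tame pro-cover of $Y$.

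Conversely, I would show that every connected finite \'etale cover $Z\to Y$ that is tame along $\bar Y\setminus Y$ is dominated by $\tilde X^t$. The key point is the reverse transitivity: the composite $Z\to X$ is finite \'etale, and at a point of $\bar Z$ over $\bar X\setminus X$ the ramification index factors as $e_{Z/X}=e_{Z/Y}\cdot e_{Y/X}$, a product of two integers prime to $p$, hence prime to $p$; so $Z\to X$ is tame along $\bar X\setminus X$ and is therefore dominated by $\tilde X^t$, compatibly with the maps to $Y$. This shows that $\tilde X^t$ dominates every tame cover of $Y$, so the map $\tilde Y^t \to \tilde X^t$ above is an isomorphism of pro-covers of $Y$; hence $U = {\rm Aut}(\tilde X^t/Y) = \pi_1^t(Y)$.

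The main obstacle is the two-sided compatibility of tameness under composition of covers, on which both of the steps above rely. This is precisely controlled by Abhyankar's lemma together with the multiplicativity of ramification indices in a tower of discrete valuation rings: products and divisors of integers prime to $p$ remain prime to $p$, and over an algebraically closed field there is no residue-field contribution to track. Once this transitivity is in place, the comparison of universal tame pro-covers is formal.
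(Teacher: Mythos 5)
Your proof is correct and rests on the same key point as the paper's: given that $Y \to X$ is tame, a finite \'etale cover $Z \to Y$ is tame if and only if the composite $Z \to Y \to X$ is tame, which (with residue extensions trivial over an algebraically closed field) reduces to multiplicativity and divisibility of prime-to-$p$ ramification indices in towers of discretely valued rings. The difference is purely one of bookkeeping --- you compare universal tame pro-covers, while the paper uses the cofinal system of open subgroups of $U$ that are normal in $\pi_1^t(X)$ --- and note that your invocation of Abhyankar's lemma at the end is superfluous, since your argument only ever uses multiplicativity of ramification indices.
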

\begin{proof}
Since  $U$ is an open subgroup of $\pi_1^t(X)$, we have 
\begin{equation} \label{OpenSubgroupBasicCofinality}
U \cong \varprojlim_{\substack{N \leq U, \  N \lhd_o \pi_1^t(X)}} U/N,  \notag
\end{equation}
as  the open subgroups $N$ of $U$ which are normal subgroups of $\pi_1^t (X)$  form a cofinal family of open normal subgroups of $U$, namely they intersect trivially. On the other hand the finite \'etale Galois cover 
$Z\to Y$  corresponding to $V\leq U$ is tame if and only if the composite $Z\to Y\to X$, which corresponds to $V\le \pi_1^t(X)$ is. Indeed 
 the corresponding statement is true for extensions of discretely valued rings, that is, if $R\hookrightarrow S$ is tame, then 
$R\hookrightarrow S\hookrightarrow T$ is tame if and only if $S\hookrightarrow T$ is tame. 
Thus $U=\pi_1^t(Y)$.

\end{proof}

\begin{proof}[Proof of Theorem~\ref{thm:proj}]
We need to show that the cohomological dimension of $\pi_1^t(X)$ is at most $1$.
By definition this amounts to showing that, for every prime number $\ell$, the $\ell$-cohomological dimension of $\pi_1^t(X)$ is at most $1$.
By \cite[Corollary 3.3.6]{NSW13}, this is equivalent to the cohomological dimension of an $\ell$-Sylow subgroup $\mathcal{S}_\ell$ of $\pi_1^t(X)$ being at most $ 1$. 
Recall from \cite[page 209]{NSW13} that
\begin{equation}
\mathcal{S}_\ell = \varprojlim_{\mathcal{S}_{\ell} \leq U \leq_o  \pi_1^t(X)} U^{(\ell)} \notag
\end{equation}
which in view of Proposition \ref{ProEllFreenessCor} is an inverse limit of free pro-$\ell$ groups.
We conclude from \cite[Theorem 3.5.15]{RZ10} that $\mathcal{S}_\ell$ is a free pro-$\ell$ group.
In other words, the cohomological dimension of $\mathcal{S}_\ell$ is at most $1$. 
\end{proof}

Let $X$ be as in Theorem~\ref{thm:proj}. Fix a finite \'etale cover $h: X'\to X$ with  $X'$ connected. Following Drinfeld~\cite[Theorem~2.5 (ii)]{Dri12} we pose the following definition.

\begin{defn} \label{defn:drinfeld} 
\begin{itemize}
\item[1)]
 A finite \'etale cover $Y\to X$ has  ramification bounded by $h$ if $Y\times_X X'\to X$ is tame.
 \item[2)]
We say that $h$ is saturated if  $\sK={\rm Ker}(\pi_1(X')\to \pi_1^t(X'))$ is normal in $\pi_1(X)$. 
 \end{itemize}
 If $h$ is saturated we set $\pi_1^h(X)=\pi_1(X)/\sK$.
 \end{defn} 
 Of course any $h$ is dominated by $h': X''\to X'\xrightarrow{h} X$ such that $X''$ is connected, $h'$ is finite \'etale Galois and saturated. 
 With this definition, we see that a finite Galois \'etale cover $Y\to X$ with Galois group $H$ has ramification bounded by $h$ if and only if the quotient $\pi_1(X)\to H$ factors through $\pi_1^h(X)\to H$.  The quotient $\pi_1(X)\to \pi_1^t(X)$ factors through $\pi_1^h(X)$ and is an isomorphism if $h$ is. 

\begin{thm}[Corollary of Theorem~\ref{thm:proj}] \label{thm:Drinfeld}
We assume that $h$ is saturated. Then the  group $\pi_1^h(X)$ is finitely presented. 

\end{thm}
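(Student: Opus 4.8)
The plan is to realize $\pi_1^t(X')$ as an open subgroup of $\pi_1^h(X)$ and then transfer finite presentation from this open subgroup to the whole group by bounding $H^2$ via the Lyndon--Hochschild--Serre spectral sequence. First I would note that since $h\colon X'\to X$ is finite étale with $X'$ connected, $X'$ is again a smooth connected affine curve, so Theorem~\ref{thm:proj} applies and $\pi_1^t(X')$ is projective; being also finitely generated (by \cite[Theorem 10.1.6]{NSW13}), it is finitely presented by \cite[Proposition 1.1]{Lub01}. Because $h$ is saturated we have $\sK=\mathrm{Ker}(\pi_1(X')\to\pi_1^t(X'))\lhd\pi_1(X)$ with $\sK\le\pi_1(X')$; dividing the open inclusion $\pi_1(X')\leq_o\pi_1(X)$ by $\sK$ identifies $\pi_1^t(X')=\pi_1(X')/\sK$ with an open subgroup of $\pi_1^h(X)=\pi_1(X)/\sK$ of index $\deg h$. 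In particular $\pi_1^h(X)$ is finitely generated, so Lubotzky's criterion (Theorem~\ref{thm:lub}) is available: it suffices to bound $\dim_\F H^2(\pi_1^h(X),M)$ linearly in $r=\dim_\F M$, uniformly over all prime finite fields $\F$.

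To produce this bound, set $G=\pi_1^h(X)$ and $U=\pi_1^t(X')\leq_o G$, and let $N=\bigcap_{g\in G}gUg^{-1}$ be the normal core of $U$, which is open and normal in $G$ and contained in $U$. As an open subgroup of $\pi_1^t(X')$, the group $N$ is itself the tame fundamental group of a smooth connected affine curve (by the Claim proved above, applied to $X'$), hence projective by Theorem~\ref{thm:proj}; in particular its cohomological dimension $\mathrm{cd}(N)$ is at most $1$, so $H^t(N,M)=0$ for all $t\ge 2$ and every finite $G$-module $M$. Writing $Q=G/N$ for the finite quotient and $m=[G:N]$, the spectral sequence $E_2^{s,t}=H^s(Q,H^t(N,M))\Rightarrow H^{s+t}(G,M)$ therefore collapses onto the two rows $t\in\{0,1\}$, and since each $E_\infty^{s,t}$ is a subquotient of $E_2^{s,t}$ this yields
\[
\dim_\F H^2(G,M)\ \le\ \dim_\F H^2(Q,M^N)+\dim_\F H^1\bigl(Q,H^1(N,M)\bigr).
\]

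It remains to bound both terms by a constant multiple of $r$ independent of $M$ and of $\F$. For the finite group $Q$, the inhomogeneous bar complex has $\dim_\F C^s(Q,A)=m^s\dim_\F A$, so $\dim_\F H^s(Q,A)\le m^s\dim_\F A$ for every finite $\F[Q]$-module $A$; thus the first term is at most $m^2\dim_\F M^N\le m^2 r$. For the second term I would bound $\dim_\F H^1(N,M)$ by choosing a surjection onto $N$ from a free profinite group of rank $d=d(N)$ with kernel $R$: the five-term inflation--restriction sequence gives an injection $H^1(N,M)\hookrightarrow H^1(F,M)$, and a continuous $1$-cocycle on a free group of rank $d$ is determined by its values on the generators, so $\dim_\F H^1(N,M)\le \dim_\F H^1(F,M)\le d\,r$; hence $\dim_\F H^1(Q,H^1(N,M))\le m\cdot d\,r$. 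Altogether $\dim_\F H^2(\pi_1^h(X),M)\le (m^2+md)\,r$, with $m$ and $d$ depending only on $h$, and Theorem~\ref{thm:lub} then gives finite presentation. The only real subtlety, and the reason for passing to the normal core $N$ rather than working with $U$ directly, is twofold: $h$ is not assumed Galois, so $U$ need not be normal in $G$, and one must treat all residue characteristics $\F$ uniformly (a restriction–corestriction argument would only control the primes not dividing $\deg h$, failing precisely for $\F=\F_p$ when $p\mid\deg h$). Once $N$ is in hand, the argument is the standard descent of finite presentation across an open subgroup, requiring no input beyond Theorems~\ref{thm:proj} and~\ref{thm:lub}.
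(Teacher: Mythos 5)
Your proof is correct, and its skeleton is the paper's: saturation identifies $\pi_1^t(X')=\pi_1(X')/\sK$ with an open subgroup of $\pi_1^h(X)=\pi_1(X)/\sK$, Theorem~\ref{thm:proj} applies to the smooth connected affine curve $X'$, and one then ascends finite presentation along the open inclusion. Where you genuinely differ is in the ascent step. The paper simply invokes its Claim~\ref{claim:lub}~(ii), which needs only that the open subgroup $U$ is finitely presented; its proof stays with $U$ itself (no normality, no core), using the exact sequence of $G$-modules $0\to M\to \Ind_U^G M\to N\to 0$, Shapiro's lemma to identify $H^2(G,\Ind_U^G M)\cong H^2(U,M)$, and Claim~\ref{claim:elem} to bound $H^1(G,N)$ --- an argument valid in every characteristic. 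So your closing assertion that one \emph{must} pass to the normal core because restriction--corestriction only controls primes not dividing $\deg h$ is a correct objection to the naive transfer, but not to the paper's, which never uses corestriction in that sense. Your replacement is nonetheless sound and exploits stronger structure available here: the core $N$ is open in $\pi_1^t(X')$, hence by the Claim following Proposition~\ref{ProEllFreenessCor} (applied to $X'$) it is again the tame fundamental group of a smooth connected affine curve, hence projective by Theorem~\ref{thm:proj}; then $\mathrm{cd}(N)\le 1$ collapses the Lyndon--Hochschild--Serre spectral sequence onto the rows $t\in\{0,1\}$, and your two $E_2$-bounds (instances of Claim~\ref{claim:elem}~(i) and~(ii)) give $\dim_{\F}H^2(\pi_1^h(X),M)\le (m^2+md)\,r$ uniformly in the prime field $\F$, where $m=[\pi_1^h(X):N]$ and $d$ is the number of generators of $N$, so Theorem~\ref{thm:lub} applies. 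What your route buys is an explicit constant and independence from the induced-module formalism; what the paper's route buys is a reusable lemma in full generality (finite presentation of profinite groups ascends along any open inclusion, with no projectivity hypothesis) and, given that lemma, a two-line proof of the theorem.
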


We shall make constant use of the following elementary bounds throughout the article.
\begin{claim}  \label{claim:elem} 
For any profinite group $\pi$, let $M$ be a linear representation of $\pi$ of rank $r$ defined over the prime field $\F$. 
Then 
\begin{itemize}
\item[(i)] $ {\rm dim}_{\F} H^m(\pi, M) \leq |\pi|^m \cdot r \ $ for all $m\in \N$ if $\pi$ is finite;
\item[(ii)]  $ {\rm dim}_{\F} H^1(\pi, M) \leq \delta \cdot r  \ $ if $\pi$ is generated by $\delta$ elements;
\item[(iii)]   $ {\rm dim}_{\F} H^0(\pi, M) \leq r$.
\end{itemize}
\end{claim}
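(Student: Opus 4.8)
The claim collects three elementary cohomological dimension bounds, so the plan is to establish each part by reducing to standard facts about group cohomology with coefficients in a finite-dimensional $\F$-vector space.

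\textbf{Part (iii).} I would begin with the easiest bound. Since $H^0(\pi,M)=M^\pi$ is the submodule of $\pi$-invariants of $M$, it is an $\F$-subspace of $M$, and therefore $\dim_\F H^0(\pi,M)\le \dim_\F M = r$. This requires no hypothesis on $\pi$.

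\textbf{Part (ii).} For the $H^1$ bound, I would use the standard description of continuous $1$-cocycles. A cocycle $c\colon \pi\to M$ is determined by its values on a generating set, so if $\pi$ is (topologically) generated by $\delta$ elements $g_1,\dots,g_\delta$, the restriction map sending $c$ to $(c(g_1),\dots,c(g_\delta))$ is an injection of the space of cocycles $Z^1(\pi,M)$ into $M^{\oplus\delta}$. This gives $\dim_\F Z^1(\pi,M)\le \delta\cdot r$, and since $H^1(\pi,M)$ is a quotient of $Z^1(\pi,M)$ by the coboundaries, we get $\dim_\F H^1(\pi,M)\le \dim_\F Z^1(\pi,M)\le \delta\cdot r$. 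The only subtlety is the continuity of cocycles and the fact that the cocycle identity forces a continuous cocycle on the dense subgroup generated by the $g_i$ to be determined by these finitely many values; this is routine given that $M$ is finite.

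\textbf{Part (i).} For the general $H^m$ bound when $\pi$ is finite, I would compute cohomology using the bar resolution (the standard inhomogeneous cochain complex), where the module of $m$-cochains is $\Map(\pi^m,M)\cong M^{\oplus |\pi|^m}$, of $\F$-dimension $|\pi|^m\cdot r$. Since $H^m(\pi,M)$ is a subquotient of the $m$-cochains, one immediately obtains $\dim_\F H^m(\pi,M)\le |\pi|^m\cdot r$. This is the crudest of the three estimates but suffices for the applications.

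None of these bounds presents a genuine obstacle; they are all direct consequences of the defining cochain complexes. The only point requiring a little care is the continuity hypothesis in the profinite setting for part (ii), but since $M$ is finite (hence discrete) every continuous cocycle factors through a finite quotient, so the argument reduces to the finite-group case where it is completely standard.
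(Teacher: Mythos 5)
Your proposal is correct and follows essentially the same route as the paper: degree-$0$ cohomology as fixed points for (iii), cocycles determined by their values on a topological generating set (using continuity and the cocycle identity) for (ii), and the inhomogeneous cochain complex $\Map(\pi^m,M)$ for (i). The paper compresses all of this into one sentence, but the underlying arguments are identical.
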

\begin{proof}
(i) comes from the description of $H^m(\pi, M)$ with cocycles viewed as maps $\varphi: \pi^m\to M$, (ii) as well together continuity of $\varphi$ and   the cocycle condition $\varphi(a\cdot b)=a\cdot \varphi(b) + \varphi(a)$, and (iii) follows from the identification of cohomology in degree $0$ with the fixed points of $\pi$ in $M$.
\end{proof}

The following claim will give us practice in applying Theorem~\ref{thm:lub} and the accompanying cohomological machinery. 
Another approach would be to follow the arguments establishing the analogous facts in combinatorial (discrete) group theory.

\begin{claim}  \label{claim:lub}
\begin{itemize}
\item[(i)] 
If $1\to S\to G\to Q \to 1$ is an extension of profinite groups where $S$ and $Q$ are finitely presented,  then  $G$ is finitely presented. 
\item[(ii)] If $U\subset G$ is a finitely presented open subgroup of a profinite group $G$, then $G$ is finitely presented.

\end{itemize}
\end{claim}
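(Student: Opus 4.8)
The plan is to prove Claim~\ref{claim:lub} by a direct application of Lubotzky's criterion (Theorem~\ref{thm:lub}), reducing both statements to the elementary cohomological bounds recorded in Claim~\ref{claim:elem}. Both parts assume $G$ finitely generated (for (i) this follows from finite generation of $S$ and $Q$; for (ii) from finite generation of the open subgroup $U$ together with finiteness of $G/U$, via the Reidemeister--Schreier type count), so the content is entirely in producing a constant $C$ with $\dim_{\F} H^2(G,M) \le C \cdot r$ for every rank-$r$ representation $M$ over every prime field $\F$.

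For part (i), I would feed the representation $M$ of $G$ into the Lyndon--Hochschild--Serre spectral sequence for the extension $1 \to S \to G \to Q \to 1$, namely
\eq{eq:lhs}{E_2^{a,b} = H^a\bigl(Q, H^b(S,M)\bigr) \Longrightarrow H^{a+b}(G,M).}
The group $H^2(G,M)$ is then filtered by subquotients of $E_\infty^{2,0}$, $E_\infty^{1,1}$, and $E_\infty^{0,2}$, so it suffices to bound the dimensions of the three terms $H^2(Q, M^S)$, $H^1(Q, H^1(S,M))$, and $H^0(Q, H^2(S,M))$. Here $M^S = H^0(S,M)$ has rank $\le r$ by Claim~\ref{claim:elem}(iii), and since $Q$ is finitely presented the first term is $\le C_Q \cdot r$ by Theorem~\ref{thm:lub}. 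For the middle term, $H^1(S,M)$ has dimension $\le \delta_S \cdot r$ by Claim~\ref{claim:elem}(ii), where $\delta_S$ is the number of generators of $S$; feeding this $\F$-vector space as a $Q$-representation into Theorem~\ref{thm:lub} (applied to the finitely presented $Q$) bounds $H^1(Q, H^1(S,M))$ by $C_Q \cdot \delta_S \cdot r$. The third term $H^2(S,M)$ has dimension $\le C_S \cdot r$ since $S$ is finitely presented, and taking $Q$-invariants only decreases dimension by Claim~\ref{claim:elem}(iii). Summing these yields the required linear bound with $C = C_Q + C_Q \delta_S + C_S$.

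For part (ii), I would use the restriction-corestriction machinery for the finite-index inclusion $U \le_o G$. The cleanest route is Shapiro's lemma: $H^2(G, M) \cong H^2(U, \res^G_U M)$ when $M$ is coinduced, but in general one uses that for an open subgroup the composite $\res \circ \cor$ and $\cor \circ \res$ are multiplication by $[G:U]$, which need not be invertible when $\F$ has characteristic dividing $[G:U]$. To sidestep this I would instead realize $G$ as an extension: since $G$ is finitely generated, it contains an open \emph{normal} subgroup $N \le U$ (the intersection of the finitely many conjugates of $U$, or a characteristic open refinement), giving $1 \to N \to G \to G/N \to 1$ with $G/N$ finite. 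By part (i) it then suffices to show $N$ is finitely presented, and $N$ is an open subgroup of the finitely presented group $U$; so part (ii) reduces to the special case of an open \emph{normal} subgroup, handled again by part (i) with $Q = U/N$ finite (hence finitely presented, its $H^2$-bound coming from Claim~\ref{claim:elem}(i)) and $S = N$.

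The main obstacle I anticipate is the inhomogeneity in $\F$: Theorem~\ref{thm:lub} demands a single constant $C$ valid across \emph{all} prime fields $\F$ simultaneously, so every bound invoked above must be verified to be uniform in the characteristic. The constants $\delta_S$ and $[G:U]$ are purely group-theoretic and hence $\F$-independent, and the finite-presentation constants $C_Q, C_S$ are given to us uniformly by the hypothesis that $S, Q, U$ are finitely presented; so the uniformity propagates correctly. The one place requiring care is the spectral sequence argument in characteristic $p$ dividing $|Q|$ or $[G:U]$, where the sequence still converges but the term $H^b(S,M)$ as a $Q$-module may be genuinely non-semisimple --- however, the crude dimension bounds of Claim~\ref{claim:elem} hold regardless of semisimplicity, so the argument goes through without modification. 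I would close by assembling the three-term estimate into the single constant and invoking Theorem~\ref{thm:lub} in the reverse direction to conclude finite presentation of $G$.
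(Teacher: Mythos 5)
Your part (i) is essentially the paper's own proof: the Lyndon--Hochschild--Serre spectral sequence for $1\to S\to G\to Q\to 1$, the elementary bounds of Claim~\ref{claim:elem}, and Lubotzky's criterion applied to the finitely presented $S$ and $Q$ to handle the terms $E_2^{0,2}$ and $E_2^{2,0}$. One slip: Theorem~\ref{thm:lub} bounds $H^2$ only, so "feeding" the module $H^1(S,M)$ into it cannot bound the middle term $H^1(Q,H^1(S,M))$; the correct tool there is Claim~\ref{claim:elem}(ii) applied to the finitely generated $Q$, giving the bound $\delta_Q\cdot\delta_S\cdot r$. This is cosmetic, since the bound you need does hold and the constant remains independent of $\F$.

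Part (ii), however, has a genuine gap. You take $N=\bigcap_{g\in G}gUg^{-1}$, an open normal subgroup of $G$ contained in $U$, and want to apply part (i) to $1\to N\to G\to G/N\to 1$; for that you must first know that $N$ is finitely presented. You assert this is "handled again by part (i) with $Q=U/N$ and $S=N$", but part (i) runs in the opposite direction: from finite presentation of the subgroup $N$ and the quotient $U/N$ it deduces finite presentation of the extension $U$ --- which is what you already know. What you actually need is the converse, that an open subgroup of the finitely presented group $U$ is itself finitely presented, and that is precisely the kind of statement part (ii) is supposed to establish; as written, the reduction is circular.

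The missing ingredient is Shapiro's lemma (induction equals coinduction for open subgroups of profinite groups), and it can be used in either of two ways. (a) Repair your reduction: $N$ is open in the finitely generated $U$, hence finitely generated, and for any rank-$s$ module $M$ of $N$ over a prime field $\F$ one has $H^2(N,M)\cong H^2\bigl(U,\mathrm{Ind}_N^U M\bigr)$, whose dimension is at most $C_U\cdot[U:N]\cdot s$; this bound is uniform in $\F$, so Theorem~\ref{thm:lub} shows $N$ is finitely presented, and then your appeal to part (i) for $1\to N\to G\to G/N\to 1$ goes through. (b) Argue directly on $G$, as the paper does: for a rank-$r$ module $M$ of $G$, embed $M$ into $\mathrm{Ind}_U^G(M|_U)$ and let $M'$ be the quotient, of rank $([G:U]-1)r$; the exact sequence $H^1(G,M')\to H^2(G,M)\to H^2\bigl(G,\mathrm{Ind}_U^G(M|_U)\bigr)\cong H^2(U,M|_U)$ bounds $H^2(G,M)$ by Claim~\ref{claim:elem}(ii) on the left and the hypothesis on $U$ on the right, and Theorem~\ref{thm:lub} concludes. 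Your instinct that restriction-corestriction fails when the characteristic divides $[G:U]$ was correct, but the resolution is the induced-module argument, not the reduction to a normal subgroup via part (i).
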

\begin{proof} We prove (i). 
A union of a generating set for $S$ with a lift of a generating set for $Q$ is a generating set for $G$, so $G$ is finitely generated.
We can therefore apply Lubotzky's Theorem~\ref{thm:lub}. 
Let $M$ be a finite-dimensional representation of $G$ of rank $r$ defined over a prime field $\F$. 
We use the spectal sequence
\ga{}{ E_2^{a,b}= H^a(Q, H^b(S, M)) \Rightarrow   H^{a+b}( G, M). \notag}
One uses Claim~\ref{claim:elem} for the groups $H^b(S, M)$ with $b=0,1,2$ then 
again {\it loc.cit.} for $E_2^{a,b}$ with $a=0,1,2$. This finishes the proof of (i). 

We prove (ii).  Clearly $G$ is generated by the generators of $U$ and lifts of the finite set $G/U$. For $M$ a rank $r$ representation of $G$ defined over $\F$, note that $N=({\rm Cor}\circ {\rm Res} (M))/M$ is a representation of $G$ of rank $(e-1)r$ where $e=[G:U]$. We apply 
Claim~\ref{claim:elem} (ii) to $H^1(G, N)$ and the assumption to $H^2(U, M)$  in the exact sequence $H^1(G, N)\to H^2(G, M)\to H^2(U, M)$ to finish the proof of (ii).

\end{proof}

\begin{proof}[Proof of Theorem~\ref{thm:Drinfeld}]

By definition, $\pi_1(X')\subset \pi_1(X)$ is open, thus $\pi_1^t(X')= \pi_1(X')/\sK\to \pi_1(X)/\sK$ is open as well. 
We apply Theorem~\ref{thm:proj} to $\pi^t_1(X')$, which together with Claim~\ref{claim:lub} (ii)   concludes the proof.

\end{proof}

\section{ The  prime to $p$ part of Theorem~\ref{thm:main}} \label{sec:pprime}

The aim of this section is to prove Proposition~\ref{prop:tamel} and  to deduce from it the finite presentation of $\pi_1(X)^{(p')}$.

 Let $X$ be a smooth quasi-projective variety defined over an algebraically closed field $k$ of characteristic $p>0$. The choice of a geometric point $x\in X$ enables one to define the tame fundamental group $\pi^t_1(X,x)$  as the automorphism of the fiber functor to Sets  defined by $x$ from the finite \'etale covers which are tame on all curves, 
 \cite[Sec.~4, Definitions, Theorem.~4.4]{KS10}.  In the case $X$ has a smooth projective compactification $j: X\hookrightarrow \bar X$ such that $\bar X\setminus X$ is a  normal crossings divisor, this  is the same definition as \cite[Section~2.2]{GM71}.
  As the isomorphism class of this group does not depend on the choice of $x$, we omit $x$ in the notation. The restriction functor from the category of tame finite \'etale covers of $X$ to the finite \'etale covers 
  defines a surjection $\pi_1(X)\surj \pi_1^t(X)$. We recall the following fact.
  \begin{prop} \label{prop:fg}
   Let $X$ be a smooth quasi-projective variety defined over an algebraically closed field $k$ of characteristic $p>0$. Then its tame fundamental group $\pi_1^t(X)$ and its prime-to-$p$ fundamental group $\pi_1(X)^{(p')}$ are topologically finitely generated.
  
  \end{prop}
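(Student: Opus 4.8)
The plan is to reduce the higher-dimensional assertion to the case of curves by means of the Lefschetz theorem for the tame fundamental group, and to treat curves directly. I first note that it suffices to prove finite generation of $\pi_1^t(X)$: any finite \'etale cover of $X$ whose degree is prime to $p$ is automatically tame (its ramification indices along any normal compactification divide the degree, hence are prime to $p$, and the residue extensions are separable since $k$ is algebraically closed), so $\pi_1(X)^{(p')}$ is a continuous quotient of $\pi_1^t(X)$. As a quotient of a topologically finitely generated profinite group is again topologically finitely generated, finite generation of $\pi_1(X)^{(p')}$ follows from that of $\pi_1^t(X)$.

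For the curve case I would invoke the classical structure theory. If $X$ is a smooth affine curve, then $\pi_1^t(X)$ is topologically finitely generated by \cite[Theorem 10.1.6]{NSW13}; concretely, with $X = \bar X \setminus \{x_1,\dots,x_n\}$ of genus $g$, the group is generated by finitely many elements, namely the $2g$ generators coming from $\bar X$ together with the $n$ tame local generators at the punctures. If $X$ is a smooth projective curve, then $\pi_1^t(X) = \pi_1(X)$ and finite generation is classical, obtained by lifting to characteristic $0$ and applying Grothendieck's specialization theorem, which also exhibits the surface-group presentation.

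To carry out the reduction when $d = \dim X \geq 2$, I would use the quasi-projective hypothesis to embed $X$ into a projective space and, by a Bertini argument, choose a general complete intersection of $d-1$ hyperplanes meeting $X$ in a smooth connected curve $C$, situated in good position relative to a suitable compactification. The Lefschetz theorem for the tame fundamental group, \cite[Theorem~1.1]{EK16}, then yields a surjection $\pi_1^t(C) \surj \pi_1^t(X)$. Since $\pi_1^t(C)$ is finitely generated by the curve case, so is its quotient $\pi_1^t(X)$, which completes the argument. Alternatively one may iterate, cutting by a single general hyperplane at each stage and applying Lefschetz surjectivity to descend the dimension one step at a time down to a curve.

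The main obstacle is the correct application of the tame Lefschetz theorem in the quasi-projective setting. One must arrange the complete-intersection curve $C$ to lie in general position relative to the boundary, so that tameness is genuinely preserved under restriction and the surjectivity statement of \cite[Theorem~1.1]{EK16} is available; this is delicate precisely because, as emphasised in the introduction, no good compactification is assumed here and $\pi_1^t$ is defined via tameness on all curves in the sense of \cite{KS10}. Ensuring the compatibility between this definition and the hypotheses of the Lefschetz theorem is the technical heart of the proof.
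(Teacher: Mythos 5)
Your overall strategy---observing that $\pi_1(X)^{(p')}$ is a quotient of $\pi_1^t(X)$, settling curves via \cite[Theorem 10.1.6]{NSW13}, and reducing higher dimensions to curves through a Lefschetz surjection onto a complete intersection curve---is the same as the paper's. The genuine gap is in the one step you defer to a citation: \cite[Theorem~1.1]{EK16} cannot carry the reduction, because that theorem concerns the Grothendieck--Murre tame fundamental group of a variety equipped with a smooth projective compactification with normal crossings boundary, and Proposition~\ref{prop:fg} assumes no such compactification. (Note that the present paper invokes \cite[Theorem~1.1 b)]{EK16} only \emph{after} an snc compactification has been secured, by hypothesis in Theorem~\ref{thm:main} and via alteration plus a cohomological injectivity in Proposition~\ref{prop:tamel}.) Nor can a good compactification be manufactured here: resolution is unavailable in characteristic $p$, and the $p$-alteration of \cite{Tem17} does not help for fundamental-group statements since, as the introduction stresses, a descended cover of a tame cover need not be tame. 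So the issue you flag as ``the technical heart'' is not a matter of arranging transversality so that \cite{EK16} applies; it is a statement that needs an independent proof, which your proposal does not supply. Your fallback of iterating single hyperplane sections fails for the same reason: the intermediate sections again lack good compactifications.

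What the paper does instead is prove the needed surjectivity directly, using only a \emph{normal} projective compactification $X\hookrightarrow \bar X$ with boundary $D=\bar X\setminus X$, which always exists. Choose $\bar C\subset \bar X$ a complete intersection curve of ample divisors in good position with respect to $D$, and set $C=\bar C\cap X$. For a connected tame \'etale cover $h: Y\to X$, let $\bar h:\bar Y\to \bar X$ be the normalization of $\bar X$ in $k(Y)$. Then $\bar B:=\bar h^{-1}(\bar C)$ is connected for purely positivity-theoretic reasons ($\bar Y$ is normal and connected, hence irreducible, and $\bar h^*\bar C$ is again a complete intersection of ample divisors); tameness enters only to guarantee that $\bar h$ is locally Kummer over $\bar C$ at the boundary points, so that $\bar B$ is smooth, hence irreducible, and therefore $B=Y\times_X C$, obtained by deleting the finitely many points of $\bar B$ over $\bar C\cap D$, is still connected. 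This connectedness for every connected tame cover is exactly the surjectivity $\pi_1^t(C)\twoheadrightarrow \pi_1^t(X)$, in the curve-tameness setting of \cite{KS10}, and the curve case then concludes. Replacing your appeal to \cite{EK16} by this argument is what is required to close the proof.
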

  
  \begin{proof}

Let $X \hookrightarrow \bar{X}$ be a normal projective compactification of $X$, with boundary $D=\bar{X}\setminus X$. Let $\bar{C}\subset \bar{X}$ be a complete intersection of dimension $1$ of ample divisors in good position with respect to $D$. Set $C=\bar{C}\cap X$. If $h: Y\to X$ is a connected  \'etale cover,  let us denote by $\bar h: \bar Y\to \bar X$ the normalization of $\bar X$ in the field of functions $k(Y)$. Then $\bar h|_{\bar C}: \bar{B}:=\bar{f}^{-1}(\bar{C})$ is connected. On the other hand, if in addition $h$ is tame, as  $\bar C$  is transversal to the smooth locus of $D$,  $\bar h|_{\bar C}$ is locally Kummer thus $\bar B$ is smooth.  Thus $B=Y\cap \bar B $ is connected as well. 
We then apply Theorem \ref{thm:proj}, in particular its consequence that $\pi_1^t(C)$ is finitely presented.  This concludes the proof for $\pi^t_1(X)$, thus for its quotient $\pi_1(X)^{(p')}$ as well. 
    
\end{proof}
 
We simplify the notation and set $\pi^t=\pi_1^t(X)$ when there is no confusion.  
We  write an exact sequence of profinite groups
\ga{4}{1\to K\to \pi \to \pi^t\to1} 
defining $K$.
\begin{lem} \label{lem:vanK} 
For a prime finite field $\F$ of characteristic different from $p$ we have
$H^1(K, \F)=0$.
\end{lem}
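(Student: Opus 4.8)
The plan is to compute $H^1(K,\F)$ with trivial coefficients, so with $\F=\F_\ell$ for $\ell=\mathrm{char}(\F)\neq p$ this is $\Hom_{\mathrm{cont}}(K,\Z/\ell)$, and to show that $K$ admits no nontrivial continuous $\Z/\ell$-quotient. The starting point is to identify which open subgroups of $\pi$ contain $K$: they are exactly the groups $\pi_1(Y)$ attached to the connected tame covers $Y\to X$, since their images in $\pi^t=\pi/K$ exhaust the open subgroups of $\pi^t$, whose intersection is trivial. Hence $K=\bigcap_Y\pi_1(Y)$, the intersection running over a cofiltered family of tame covers. I would then invoke the standard identification of continuous cohomology with a filtered colimit along restriction maps,
\[
H^1(K,\F)=\varinjlim_{Y}H^1(\pi_1(Y),\F),
\]
which holds precisely because $K$ is the intersection of the cofiltered family of open subgroups $\pi_1(Y)$.

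With this in hand, every class in $H^1(K,\F)$ is a restriction $\mathrm{res}^{\pi_1(Y)}_{K}(c)$ of some $c\in H^1(\pi_1(Y),\F)=\Hom(\pi_1(Y),\Z/\ell)$ for a suitable tame cover $Y$, and it suffices to show each such restriction vanishes. If $c\neq 0$ then, $\Z/\ell$ being simple, $c$ is surjective and its kernel is $\pi_1(Z)$ for the connected $\Z/\ell$-cover $Z\to Y$ that $c$ classifies. The key step is that $Z\to X$ is again tame: the cover $Z\to Y$ is finite \'etale of degree $\ell$ prime to $p$, so its ramification along the boundary has index dividing $\ell$ and separable (indeed trivial) residue extensions over the algebraically closed field $k$, whence $Z\to Y$ is tame; by transitivity of tameness — the same property of towers of discretely valued rings used in the Claim of Section~2, applied at the divisorial valuations of a compactification — the composite $Z\to Y\to X$ is tame.

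Consequently $Z$ belongs to the tame tower, i.e.\ $\pi_1(Z)$ is an open subgroup of $\pi$ containing $K$; but $\pi_1(Z)=\ker(c)$, so $K\subseteq\ker(c)$ and therefore $\mathrm{res}^{\pi_1(Y)}_{K}(c)=c|_K=0$. Since every class in $H^1(K,\F)$ arises in this way, we conclude $H^1(K,\F)=0$.

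The only delicate point I anticipate is the tameness bookkeeping in the step ``$Z\to X$ is tame'': one must ensure that ``finite \'etale of degree prime to $p$'' forces tameness along each boundary component and that tameness composes, in whatever notion of tameness is operative — in particular the curve-by-curve notion of Kerz--Schmidt when $X$ has dimension greater than one. Both facts are standard in that theory and reduce locally to the elementary statement about extensions of discretely valued rings already quoted in Section~2; the cohomological colimit identification is likewise routine.
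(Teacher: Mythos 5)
Your proof is correct, and its engine is the same as the paper's: write $H^1(K,\F)$ as a filtered colimit of the groups $H^1(\pi_1(Y),\F)$ over the finite connected tame covers $Y\to X$, then kill each class $c$ by observing that the $\Z/\ell$-cover $Z\to Y$ it classifies is tame over $Y$ --- being Galois of order prime to $p$, its inertia groups embed into $\Z/\ell$, so there is no wild part (note that Galois-ness, not merely the degree being prime to $p$, is what makes this work) --- hence, by transitivity of tameness, tame over $X$, so that $K\subseteq\pi_1(Z)=\ker(c)$ and $c|_K=0$. The only real difference is how the colimit identification is reached. The paper argues geometrically: it identifies $H^1(K,\F)$ with the \'etale cohomology $H^1(X^t,\F)$ of the pro-universal tame cover via the Lyndon--Hochschild--Serre spectral sequence for $\tilde X\to X^t$ (using $H^1(\tilde X,\F)=0$), then uses continuity of \'etale cohomology along the tower, $H^1(X^t,\F)=\varinjlim_\alpha H^1(X_\alpha,\F)$, together with $H^1(X_\alpha,\F)=H^1(\pi_1(X_\alpha),\F)$. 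You stay entirely inside profinite group theory: $K=\bigcap_Y\pi_1(Y)$ is a cofiltered intersection of open subgroups (your characterization of the open subgroups containing $K$ is correct; it uses that $K$ is normal in $\pi$), and continuous cohomology with finite discrete coefficients converts such an intersection into a colimit along restriction maps --- a standard fact, provable by a compactness argument on continuous cochains. Your route is more elementary and self-contained, avoiding \'etale cohomology and spectral sequences altogether; the paper's geometric phrasing has the mild advantage of running in parallel with the comparison maps $H^i(\pi,M)\to H^i(X,M)$ that it needs immediately afterwards in Corollary~\ref{cor:inj}. Finally, the two tameness facts you flag (prime-to-$p$ Galois covers are curve-tame; tame over tame is tame, reducing along curves to the DVR statement quoted in Section~2) are exactly the facts the paper itself uses, equally implicitly, when it asserts that the torsor $X_{a(\alpha)}\to X_\alpha$ ``is tame, thus the composite \dots is dominated by $X^t$''; so your bookkeeping caveat marks no gap relative to the paper's own level of detail.
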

\begin{proof}
The proof is inspired by \cite{GM71},  Proposition~5.2.4 a).
Let $\tilde X \to X$ be the universal cover with base point $x$ and Galois group $\pi$ and $\tilde X  \to X^t\to X$ be the intermediate 'universal tame cover' with Galois group $\pi^t$.  Then $K=\pi_1(X^t,x)$. By definition $H^1(\tilde X, \F)=0$ thus the 
 Lyndon-Hochshild-Serre spectral sequence applied to $\tilde X\to X^t$ 
 implies
$
H^1(K, \F) =H^1(X^t, \F)$. 

On the other hand, $H^1(X^t, \F)=\varinjlim_\alpha H^1(X_\alpha, \F)$ for all the intermediate covers $X^t\to X_\alpha\to X$ with $X_\alpha\to X$ finite.  Let $a\in H^1(K, \F)$, so there is $X_\alpha\to X$ as above so that $a$ is the image of a class 
$a_\alpha\in H^1(X_\alpha, \F) =H^1(\pi^\alpha, 
\F) = H^1(\pi^{\alpha,t}, 
\F)  $ where $\pi^\alpha=\pi_1(X_\alpha)
\subset  \pi$, and $\pi^{\alpha,t }$ is its tame quotient. The latter factorization  follows from the fact that the pro-$\ell$-completion of $\pi^\alpha$  is a quotient of $\pi^{t,\alpha}$. This defines an $\F$-torsor based at $x$
\ga{}{ X_{a(\alpha)} =\tilde X \times_{\pi^\alpha}\F = X^t\times_{  \pi^{\alpha,t} } \F \to X_\alpha \notag}
which is tame, thus the composite 
$ X_{a(\alpha)}\to X_a\to X $
is dominated by $X^t$. Finally by construction the image of $a_\alpha$ in $H^1(X_{a(\alpha)}, \F)$ vanishes. This finishes the proof.

\end{proof}

  \begin{cor} \label{cor:inj}
Let $M$ be a  representation of $\pi^t$ on a finite dimensional $\F$-vector space, where the characteristic of $\F$ is not $p$. Then the maps
\ga{}{ H^2(\pi^t, M) \to H^2(\pi, M)  \to H^2(X, M) \notag}
are injective.

\end{cor}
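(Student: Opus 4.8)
The plan is to realize both maps as edge homomorphisms of Hochschild--Serre type spectral sequences and to read off injectivity from the associated five-term exact sequences, with Lemma~\ref{lem:vanK} serving as the only nontrivial input. Write $r=\dim_\F M$.

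First I would record the elementary but crucial observation that, since $M$ is a representation of $\pi^t=\pi/K$, the normal subgroup $K$ acts trivially on $M$; hence $M\cong \F^{\oplus r}$ as a $K$-module, and therefore
\[
H^1(K,M)\cong H^1(K,\F)^{\oplus r}=0
\]
by Lemma~\ref{lem:vanK} (this is exactly where the hypothesis $\mathrm{char}\,\F\neq p$ is used). For the first map $H^2(\pi^t,M)\to H^2(\pi,M)$, which is the inflation map attached to the extension \eqref{4}, I would invoke the spectral sequence $E_2^{a,b}=H^a(\pi^t,H^b(K,M))\Rightarrow H^{a+b}(\pi,M)$, whose five-term exact sequence reads
\[
0\to H^1(\pi^t,M)\to H^1(\pi,M)\to H^0(\pi^t,H^1(K,M))\to H^2(\pi^t,M)\to H^2(\pi,M).
\]
The kernel of inflation is the image of $H^0(\pi^t,H^1(K,M))$, which vanishes by the previous step; hence the first map is injective.

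For the second map $H^2(\pi,M)\to H^2(X,M)$, the comparison map from continuous cohomology of $\pi=\pi_1(X)$ to \'etale cohomology, I would use the Cartan--Leray spectral sequence of the universal cover $\tilde X\to X$, namely $E_2^{a,b}=H^a(\pi,H^b(\tilde X,M))\Rightarrow H^{a+b}(X,M)$. Since $\tilde X$ is simply connected the pullback of $M$ to $\tilde X$ is the constant sheaf $\F^{\oplus r}$, so $H^1(\tilde X,M)=H^1(\tilde X,\F)^{\oplus r}=0$; note $H^0(\tilde X,M)=M$ because $\tilde X$ is connected. The five-term exact sequence
\[
0\to H^1(\pi,M)\to H^1(X,M)\to H^0(\pi,H^1(\tilde X,M))\to H^2(\pi,M)\to H^2(X,M)
\]
then shows that the kernel of $H^2(\pi,M)\to H^2(X,M)$ is the image of $H^0(\pi,H^1(\tilde X,M))=0$, so this map is injective as well. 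Being a composite of two injections, the total map is injective, which is the assertion.

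The only substantive ingredient is the vanishing $H^1(K,M)=0$, so in a sense the difficulty has already been absorbed into Lemma~\ref{lem:vanK}; the remainder is formal. Accordingly, the points to which I would pay the most attention are the two bookkeeping checks that make the reduction legitimate: first, that $K$ genuinely acts trivially on $M$, so that $H^1(K,M)$ really splits as $r$ copies of $H^1(K,\F)$ rather than requiring coefficients in a nonconstant $K$-module; and second, that the degree-$2$ comparison map is precisely the edge homomorphism $E_2^{2,0}\to H^2(X,M)$ of the Cartan--Leray spectral sequence, so that its kernel is controlled exactly by $H^1(\tilde X,M)$.
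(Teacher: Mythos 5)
Your proof is correct and follows essentially the same route as the paper: the paper likewise treats the second map via the Lyndon--Hochschild--Serre (Cartan--Leray) spectral sequence of the universal cover $\tilde X\to X$ using $H^1(\tilde X,\F)=0$, and the first map via the exact sequence $(H^1(K,\F)\otimes M)^{\pi^t}\to H^2(\pi^t,M)\to H^2(\pi,M)$ coming from the extension \eqref{4}, with Lemma~\ref{lem:vanK} killing the left-hand term. Your two bookkeeping checks (trivial $K$-action on $M$, identification of the comparison map with the edge map) are exactly the identifications the paper uses implicitly.
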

\begin{proof} 
Let $\tilde X\to X$ be the universal cover with base point $x$. Then by definition
$H^1(\tilde X, \F)=0$. By the Lyndon-Hochshild-Serre spectral sequence we conclude
\ga{22}{ H^i(\pi, M)=H^i(X, M) \ 0\le i\le 1, \ H^2(\pi, M) \hookrightarrow H^2(X, M)}
%\notag}
regardless of the characteristic of $\F$. 
We have the exact sequence 
\ga{}{  (H^1(K, \F) \otimes M)^{\pi^t} \to H^2(\pi^t, M) \to H^2(\pi, M).\notag}
We apply Lemma~\ref{lem:vanK}.
\end{proof}

\begin{prop} \label{prop:tamel}
Let $X$ be a smooth quasi-projective variety defined over an algebraically closed field $k$ of characteristic $p>0$. There is a constant $C\in \R_{>0}$ such that for any  $r$-dimensional linear representation $M$ of $\pi^t$ over a prime field $\F$ of characteristic prime to  $p$ one has 
\ga{}{ {\rm dim}_{\F} H^2(\pi^t, M)\le C\cdot r.\notag}

\end{prop}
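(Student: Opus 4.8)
The plan is to apply Lubotzky's criterion (Theorem~\ref{thm:lub}) in reverse: rather than proving finite presentation directly, we must exhibit a constant $C$, independent of the representation $M$ and of the prime field $\F$ (of characteristic $\ell \neq p$), bounding $\dim_{\F} H^2(\pi^t, M)$ linearly in $r = \rank M$. The crucial leverage is Corollary~\ref{cor:inj}, which gives an injection $H^2(\pi^t, M) \hookrightarrow H^2(X, M)$; so it suffices to bound $\dim_\F H^2(X, M)$ linearly in $r$. Here $M$ is regarded as a lisse $\F$-sheaf on $X$ coming from a tame representation, and the game becomes one of controlling \'etale cohomology of a constructible sheaf in terms of its rank alone.

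First I would pass to a convenient geometric model. Since $M$ is tame, after possibly replacing $X$ by an alteration \`a la de Jong--Gabber--Temkin (as the introduction signals) one reduces to the situation where $X$ has a smooth projective compactification $\bar X$ with $\bar X \setminus X$ a normal crossings divisor, and $M$ extends to a tamely ramified lisse sheaf. The point of this reduction is that on such a good compactification the cohomology $H^*(X, M)$ is computable and, crucially, well-behaved: purity and the theory of tame (moderately ramified) sheaves apply. I would then invoke the Grothendieck--Ogg--Shafarevich / Euler characteristic machinery, in the form of Deligne's theorem \cite[Corollaire~2.7]{Ill81}, which says that for a tamely ramified $\F_\ell$-sheaf the Euler--Poincar\'e characteristic $\chi(X, M)$ depends only on the rank $r$ and the geometry of $X$ (the topological Euler characteristic of $X$), and \emph{not} on $\ell$ nor on the finer structure of $M$. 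Concretely, $\chi(X,M) = r \cdot \chi(X, \F)$.

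From the Euler characteristic identity I would bound $\dim_\F H^2$ as follows. Writing $\chi(X, M) = \sum_i (-1)^i \dim_\F H^i(X, M)$, one has
\ga{}{ \dim_\F H^2(X,M) \le \dim_\F H^2(X,M) + \dim_\F H^0(X,M) = \chi(X,M) + \dim_\F H^1(X,M) \notag }
when $X$ is an affine curve (where $H^i$ vanishes for $i>2$ and $H^2=0$ by Artin's affine theorem, so the curve case is cheap), but in general one must control the alternating sum together with the lower-degree terms. The lower terms are easy: $\dim_\F H^0(X,M) \le r$ by Claim~\ref{claim:elem}(iii), and $\dim_\F H^1(X,M) = \dim_\F H^1(\pi_1(X), M)$ which is $\le \delta \cdot r$ by Claim~\ref{claim:elem}(ii) once we know $\pi_1(X)$ — or rather the relevant quotient acting on $M$ — is generated by boundedly many elements; finite generation of $\pi^t$ comes from Proposition~\ref{prop:fg}. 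The higher terms ($H^{>2}$, in dimension $\ge 2$) are where the Lefschetz theorem for the tame fundamental group \cite[Theorem~1.1]{EK16} enters, reducing the dimension and letting one induct down to curves or surfaces.

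The main obstacle, as the authors themselves flag, is making the constant $C$ \emph{independent of} $\ell$. Deligne's Euler characteristic formula is uniform in $\ell$, and the $H^0, H^1$ bounds via Claim~\ref{claim:elem} are uniform, so the difficulty is concentrated in bounding any cohomology in degrees $\ge 2$ beyond what the Euler characteristic alone controls — one needs the individual Betti numbers, not just their alternating sum, to be uniformly linear in $r$. The strategy for this is to specialize to the case $k = \overline{\F_p}$ and exploit the cohomological expression of the zeta function: the $\ell$-independence of the trace of Frobenius (and hence of the dimensions, via the Weil conjectures / purity of weights) forces the Betti numbers $\dim_{\F_\ell} H^i$ to be bounded independently of $\ell$ for $\ell$ large, with the finitely many small $\ell$ absorbed into the constant. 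I expect this $\ell$-uniformity step, rather than the geometric reductions, to be the genuinely delicate part of the argument.
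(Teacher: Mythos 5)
Your outline reproduces the paper's skeleton --- reduce to $H^2(X,M)$ via Corollary~\ref{cor:inj}, pass to a good compactification by a de Jong--Gabber--Temkin alteration, invoke Deligne's equality $\chi(X,M)=r\cdot\chi(X,\F)$ from \cite{Ill81}, and use the Lefschetz theorem of \cite{EK16} to drop the dimension --- but the places where you defer to machinery are exactly where the content lies, and two of them do not work as you state them. First, the alteration step: what has to be proved is that $h^*\colon H^2(X,M)\to H^2(Y,h^*M)$ is injective, and this is \emph{false} for a general alteration; the paper uses that the alteration of \cite[Thm.~1.2.5]{Tem17} can be chosen of degree $\delta$ a power of $p$, hence invertible in $\F$, and then argues via ${\rm Tr}(h^*x\cup h^*y)=\delta\,(x\cup y)$ and Poincar\'e duality that $h^*x=0$ forces $x=0$. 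You never use that the degree is prime to $\ell$, so your reduction is unjustified. Second, the endgame: \cite[Theorem~1.1 b)]{EK16} is an isomorphism statement only in dimension $\ge 3$, so the Lefschetz reduction stops at surfaces; you cannot ``induct down to curves'', because in dimension $2$ one only gets a surjection $\pi_1^t(C)\twoheadrightarrow\pi_1^t(X)$, and a surjection of profinite groups does not induce an injection on $H^2$ (inflation has a kernel coming from $H^1$ of the kernel, by inflation--restriction). And for a \emph{projective} surface the Euler characteristic does not control $H^2$, since $H^3$ and $H^4$ are nonzero in general. The step you are missing is the paper's purity argument: for a smooth ample curve $\bar D$ in good position, purity \cite[XVI, Th\'eor\`eme~3.7]{SGA4} gives exactness of $H^0(D,M)\to H^2(X,M)\to H^2(X\setminus D,M)$, reducing at the cost of $r$ to $X$ affine, where Artin vanishing kills $H^3$ and $H^4$. (Poincar\'e duality on the projective surface, bounding $\dim H^3$ and $\dim H^4$ by $\dim H^1(X,M^\vee)$ and $\dim H^0(X,M^\vee)$, would be an acceptable substitute; but some such step is indispensable.) A smaller gap of the same kind: your bound $\dim_\F H^1(X,M)\le \delta\cdot r$ needs $H^1(X,M)=H^1(\pi^t,M)$, which is not formal since $\pi_1(X)$ itself is not finitely generated; it requires \eqref{22} together with the vanishing $H^1(K,\F)=0$ of Lemma~\ref{lem:vanK}.

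Third, and most clearly a step that would fail as written: your $\ell$-uniformity argument. You claim the zeta function and purity of weights force $\dim_{\F_\ell}H^i$ to be independent of $\ell$ for $\ell$ large. Weil~II and the zeta function control $\Q_\ell$-cohomology; the discrepancy between $\dim_{\F_\ell}H^i(-,\F_\ell)$ and $\dim_{\Q_\ell}H^i(-,\Q_\ell)$ is torsion in $H^*(-,\Z_\ell)$, about which these inputs say nothing (its vanishing for almost all $\ell$ on smooth projective varieties is a theorem of Gabber, a far heavier tool than anything the paper uses). The paper's point is that no control of individual mod-$\ell$ Betti numbers is needed: only the constant-coefficient Euler characteristic $\chi(X,\F_\ell)$ enters the final bound, and one has $\chi(-,\F_\ell)=\chi(-,\Q_\ell)$ \emph{exactly, for every} $\ell$, because torsion cancels in the alternating sum by the universal-coefficient sequences; then $\chi(X,\Q_\ell)$ is $\ell$-independent by inclusion--exclusion over $\bar X$ and the smooth projective strata $D_I$ of the boundary, together with Deligne's purity \cite[Th\'eor\`eme~3.3.1]{Del80} (or the zeta function when $k=\bar\F_p$, plus specialization in general). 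So the genuinely delicate points are not where you expect: the uniformity in $\ell$ is a cancellation trick, while the geometric steps you compressed --- the prime-to-$\ell$ degree of the alteration and the passage from projective surfaces to affine ones --- carry the real weight.
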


\begin{proof}
By the prime to $\ell$-alteration theorem~\cite[Thm.~1.2.5]{Tem17} by de Jong-Gabber-Temkin, there is an  alteration $h: Y\to X$ of degree $\delta$ a power of $p$, 
with a smooth projective  compactification $Y\hookrightarrow \bar Y$ such that $\bar Y\setminus Y$ is a strict normal crossings divisors.

Let $d$ be the dimension of $X$. For $x\in H^2(X, M), y\in 
H^{2d-2}_c(X, M^\vee)$, one has ${\rm Tr} (h^*x\cup h^*y)=\delta (x\cup y) \in H^{2d}_c(X, \F)$ where the trace map ${\rm Tr}:  H^{2d}_c(Y, \F)\to H^{2d}_c(X,\F)$ is an isomorphism. (We do not write Tate twists as $k$ is algebraically closed). If $h^*x=0$, then $h^*x\cup h^*y=0$; as $\delta$ is prime to the characteristic of $\F$, this implies $x\cup y=0$.  Thus if $h^*x=0$ then $x\cup H^{2d-2}_c(X,M^\vee)=0$, which by Poincar\'e duality \cite[Expos\'e~XVIII, Th\'eor\`eme~3.2.5]{SGA4}  implies that $x=0$.  We conclude that $h^*: H^2(X, M)\to H^2(Y, h^*M)$ is injective, thus by Corollary~\ref{cor:inj} that $h^*: H^2(\pi^t, M)\to H^2(\pi_1^t(Y), h^*M)$ is injective as well. In other words, we may assume that  $X$ has a smooth projective  compactification $X\hookrightarrow \bar X$ such that $\bar X\setminus X$ is a strict normal crossings divisors.

We apply \cite[Theorem~1.1 b)]{EK16} and \cite[X, p.90]{SGA2} to reduce to the case where $d=2$. Let $\bar D$ be smooth ample curve on $\bar X$ in good position with respect to $\bar X\setminus X$. Set $D=X\cap \bar D$. 
By purity \cite[XVI, Th\'eor\`eme.~3.7]{SGA4},  the Gysin morphism
$H^0(D, M) \to H^2_D(X, M)$ is an isomorphism, thus the sequence
$H^0(D, M)\to H^2(X, M)\to H^2(X\setminus D, M)$ is exact. 
As ${\rm dim}_{\F} H^0(D, M) \le r$ we may assume that $X$ is affine and we compute $H^2(X,M)$. 
By Deligne's theorem~\cite[Th\'eor\`eme~2.1]{Ill81}  and tameness of $M$, one has 
$\chi(X, M)=r\cdot \chi(X, \F)$. 
By Artin vanishing theorem \cite[Corollaire~3.5]{SGA4} we have $H^3(X,M)=H^4(X,M)=0$. Thus 
\[{\rm dim}_{\F} H^2(X,M) \le {\rm dim}_{\F}H^1(X, M) + r\cdot \chi(X,\F).\] 
We apply Claim~\ref{claim:elem}  and \eqref{22} to $H^1(X, M)$. 

It remains to bound $|\chi(X, \F)|$ independently of the choice of $\F$. This is a general argument for a normal crossings divisor compactification $(\bar X,D)$.  Write $D=\cup_i D_i=\bar X\setminus X$, where the $D_i$ are the irreducible components. 
For $I=\{i_1,\ldots, i_s\}$, set $D_I=\cap_{i\in I} D_i$. Then 
\ga{}{ \chi(X, \F)=\chi(\bar X, \F) +\sum_{|I|=a} (-1)^a \chi( D_I, \F).\notag} 

On the other hand, the integers  $|\chi(\bar X, \F)|, |\chi(D_I, \F)|$ are seen to be independent of the choice of $\F$, as follows. The cohomology groups $H^i(-, \Z_\ell)$ are modules of finite type over $\Z_\ell$.  
If $V$ is a finite type $\Z_\ell$-module, so $V=L\oplus T$ where $L$ is  free and $T=\oplus_1^t \Z_\ell/\ell^{n_i}, n_i\ge 1$ is torsion, one has ${\rm dim}_{\F_\ell} (V\otimes_{\Z_\ell} \F_\ell)={\rm dim}_{\Q_\ell} (V\otimes_{\Z_\ell} \Q_\ell )+t$ while ${\rm dim}_{\F_\ell} 
{\rm Ker} (\cdot \ell: V\to V)=t$.  This together with the (universal coefficient) exact sequences (for $i\geq 0$) 
\ml{}{ 0\to H^i(-,\Z_\ell)\otimes_{\Z_\ell} \F_\ell \to H^i(-,\F_\ell) \to\\ {\rm Ker} (\cdot \ell: (H^{i+1}(-,\Z_\ell) \to H^{i+1}(-,\Z_\ell))) \to 0. \notag}
shows  $\chi(-, \F_\ell)=\chi(-, \Q_\ell)$ and reduces the problem to the independence of $\ell$ of  $|\chi(\bar X, \Q_\ell)|, |\chi(D_I, \Q_\ell|.$  This last  point can be seen in at least two ways.  We can argue using Deligne's purity of cohomology \cite[Th\'eor\`eme~3.3.1]{Del80} out of which we conclude that even the single dimensions  ${\rm dim}_{ \Q_\ell} H^i(\bar X, \Q_\ell)$  and ${\rm dim}_{\F} H^i(D_I, \Q_\ell)$ do  not depend on $\ell\neq p$. Or if $k=\bar \F_p$ we argue that $\chi$ is  the difference of the degree of the denominator and the degree of the numerator of the  zeta function. If $k\neq \bar \F_p$ one reduces to this case by specialization. Either way this finishes the proof.

\end{proof}

We now deduce the finite presentation of $\pi_1(X)^{(p')}$ (see the introduction). 
\begin{cor} \label{cor:primetop}
Let $X$ be a smooth connected quasi-projective variety defined over an algebraically closed field $k$ of characteristic $p>0$.  Then  $\pi_1(X)^{(p')}$ is finitely presented.

\end{cor}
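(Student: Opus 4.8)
Write $\pi^t=\pi_1^t(X)$ and $G=\pi_1(X)^{(p')}$. The plan is to verify Lubotzky's criterion (Theorem~\ref{thm:lub}) for $G$, which is finitely generated by Proposition~\ref{prop:fg}. The first point is that every finite \'etale cover of $X$ of degree prime to $p$ is automatically tame (its inertia groups, being subquotients of the Galois group, have order prime to $p$, so there is no wild part), so $G$ is a quotient of $\pi^t$; being pro-prime-to-$p$, it is the maximal prime-to-$p$ quotient, $G=(\pi^t)^{(p')}$. I record this as an exact sequence
\ga{}{1\to L\to \pi^t\to G\to 1\notag}
defining $L$. By Theorem~\ref{thm:lub} it then suffices to find a constant $C$ with $\dim_\F H^2(G,M)\le C\cdot r$ for every $r$-dimensional representation $M$ of $G$ over a prime field $\F$.

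I would argue according to the characteristic of $\F$. If $\F=\F_p$, then $M$ is $p$-torsion while $G$ is pro-prime-to-$p$, so the $p$-cohomological dimension of $G$ vanishes and $H^2(G,M)=0$. If $\F=\F_\ell$ with $\ell\neq p$, I inflate $M$ to a representation of $\pi^t$ and use the five-term exact sequence of the extension above,
\ga{}{H^1(L,M)^{G}\to H^2(G,M)\to H^2(\pi^t,M),\notag}
whose last map is inflation. Granting that $H^1(L,M)^{G}=0$, this inflation map is injective, and Proposition~\ref{prop:tamel} gives $\dim_\F H^2(G,M)\le \dim_\F H^2(\pi^t,M)\le C\cdot r$. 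Taking the constant from Proposition~\ref{prop:tamel} thus handles both cases at once, and Theorem~\ref{thm:lub} yields the finite presentation of $G$.

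The crux, and the step I expect to be the main obstacle, is the vanishing $H^1(L,M)^{G}=0$ for $\ell\neq p$. Since $L$ acts trivially on $M$, this group is the space of $\pi^t$-equivariant continuous homomorphisms $\phi\colon L\to M$, where $\pi^t$ acts on $L$ by conjugation and on $M$ through $G$. For such a $\phi$ the kernel is normal in $\pi^t$ and the image $\phi(L)\subseteq M$ is an elementary abelian $\ell$-group; hence $\pi^t/\ker\phi$ is an extension of $G$ by a finite prime-to-$p$ group, so it is itself pro-prime-to-$p$. By maximality of $G$ among prime-to-$p$ quotients of $\pi^t$ this forces $L\subseteq \ker\phi$, i.e. $\phi=0$. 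This is precisely the analogue, for the extension $\pi^t\to(\pi^t)^{(p')}$, of the vanishing $H^1(K,\F)=0$ established in Lemma~\ref{lem:vanK} for $\pi\to\pi^t$; once it is in place the two characteristic cases combine into the single linear bound required by Theorem~\ref{thm:lub}.
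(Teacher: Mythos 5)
Your proposal is correct, and its skeleton is the same as the paper's: finite generation from Proposition~\ref{prop:fg}, Lubotzky's criterion (Theorem~\ref{thm:lub}), vanishing of $H^2$ when $\F=\F_p$ because $G=\pi_1(X)^{(p')}$ has supernatural order prime to $p$ (the paper quotes \cite[\S3, Proposition~11, Corollary~2]{Ser97} for this), and, for $\ell\neq p$, the five-term exact sequence of $1\to K^{(p')}\to\pi^t\to G\to 1$ followed by Proposition~\ref{prop:tamel}. The one genuine divergence is the key vanishing step. The paper proves the stronger statement \eqref{6}, namely $H^1(K^{(p')},\F)=0$, i.e.\ that \emph{every} continuous homomorphism $K^{(p')}\to\F$ is trivial; since the kernel of such a homomorphism need not be normal in $\pi^t$, the paper first extends it to an open normal subgroup $U\lhd_o\pi^t$ via \cite[Lemma~1.2.5(c)]{FJ08} and then intersects all $\pi^t$-conjugates of the kernel to manufacture a normal subgroup $R$, before running the maximality argument. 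You observe instead that only the invariant part $H^1(L,M)^G=\Hom_{\pi^t}(L,M)$ enters the five-term sequence, and that for a $\pi^t$-equivariant homomorphism the kernel is automatically normal in $\pi^t$, so the Fried--Jarden extension lemma and the conjugation trick become unnecessary. From that point on the two arguments coincide: the quotient of $\pi^t$ by the kernel is an extension of $G$ by a finite group of order prime to $p$, hence itself pro-prime-to-$p$ (supernatural orders multiply in extensions), so by maximality of $G$ among pro-prime-to-$p$ quotients of $\pi^t$ the homomorphism vanishes. In short, your route proves a weaker vanishing lemma than the paper's, but exactly the one the spectral sequence requires, and with fewer external inputs; the paper's version has the advantage of being a statement about $K^{(p')}$ alone, independent of any coefficient module $M$.
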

\begin{proof}
We  set $\pi^{(p')}=\pi_1^{(p')}(X,x)$ to simplify the notation, which by Proposition~\ref{prop:fg} is finitely generated. We  apply Theorem~\ref{thm:lub} to   an $r$-dimensional linear representation $M$ of  $\pi^{(p')}$. 
As the order $|\pi^{(p')}|$ is prime to $p$ as a supernatural number,  one has $H^2(\pi^{(p')}, M)=0$ 
from \cite[\S3. Proposition~11,~Corollary 2]{Ser97}  when $\F=\F_p$.  We now assume that the characteristic of $\F$ is prime to $p$. We define the exact sequence
\ga{}{1\to K^{(p')}\to \pi^t\to \pi^{(p')}\to 1 \notag}
yielding the exact sequence of cohomology
\ga{5}{ (H^1(K^{(p')}, \F)\otimes_{\F} M)^{\pi^{(p')}} \to H^2(\pi^{(p')},  M) \to H^2(\pi^t, M).} 

By \cite[Lemma~3.4.1(d)]{RZ10}, applied with $K=K^{(p')}$, and the formation $\mathcal{C}$ of finite groups of order prime to $p$, one has 
\ga{6}{ H^1(K^{(p')}, \F)=0.}
Indeed, let $\phi \colon K^{(p')} \to \F_\ell$ be a continuous group homomorphism.
By \cite[ Lemma~1.2.5(c)]{FJ08}, $\phi$ extends to $\Phi: U\to \F$ where 
   $ K^{(p')} \subset U \lhd \pi^t$ and $U$ is open normal  in $\pi^t$. Set $L={\rm Ker} ( \Phi) $. 
   As $U/L \subset \F$, $U/gLg^{-1} \subset \F$  for any $ g\in \pi^t$ as well.  Set $R=\cap_{g\in \pi^t} gLg^{-1}\subset \pi^t$, which is normal in $\pi^t$. Thus
   $U/R \subset \oplus_{\rm finite} \F$  and $\pi^t/R$ is an extension of $\pi^t/U$ (a finite quotient of $\pi^{(p')}$)  by the subgroup of  $\oplus_{\rm finite} \F$, which has order prime to $p$.  In particular, $\pi^t\twoheadrightarrow\pi^t/R$ factors through $\pi^{(p')}$. Thus $L\supset R\supset K^{(p')}$, which shows $\phi=0$. 
   
   Thus \eqref{6} implies that the  term  on the left in \eqref{5} vanishes. We now apply 
   Proposition~\ref{prop:tamel} to the term on the right to finish the proof.

\end{proof}

\section{ The tame fundamental group of a smooth quasi-projective variety admitting a good compactification}

The aim of this section is to prove Theorem~\ref{thm:main}.   We set $\pi^t=\pi^t_1(X)$ to simplify the notation. 
 Given Proposition~\ref{prop:tamel} which is true even if one does not have a good compactification, it remains  to bound 
$H^2(\pi^t, M)$  linearly in $r$  when $M$ is defined over $\F=\F_p$.   By  $\underline{M}$ we denote the local system associated to $M$. 

\begin{prop} \label{prop:numtame}
Let $X$ be as in Theorem~\ref{thm:main}, $M$ be a linear representation of $\pi_1^t(X)$ defined over $\F_p$.  Then there is an injective $\F_p$-linear map $H^2(\pi_1^t(X), M)\to H^2(\bar X, j_*\underline{M})$.

\end{prop}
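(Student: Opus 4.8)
The plan is to build, for $\F=\F_p$, a Hochschild--Serre type comparison between the continuous cohomology of $\pi^t$ and the \'etale cohomology of $\bar X$ with coefficients in $j_*\underline M$, by climbing the tower of finite tame covers and exploiting that a \emph{good} compactification forces numerical tameness. Write $\pi^t=\varprojlim_\alpha G_\alpha$ with $G_\alpha={\rm Aut}(Y_\alpha/X)$ running over the finite tame Galois covers $\pi_\alpha\colon Y_\alpha\to X$, and fix $\alpha_0$ so large that $M$ is trivialized on $Y_{\alpha_0}$. For $\alpha\ge\alpha_0$ let $\bar Y_\alpha$ be the normalization of $\bar X$ in $k(Y_\alpha)$, with projection $\bar\pi_\alpha\colon\bar Y_\alpha\to\bar X$, its $G_\alpha$-action, and open part $j_{Y_\alpha}\colon Y_\alpha\hookrightarrow\bar Y_\alpha$. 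Since $\bar Y_\alpha$ is normal and $M$ is constant on $Y_\alpha$, the sheaf $\mathcal F_\alpha:=(j_{Y_\alpha})_*\,\pi_\alpha^*\underline M$ is the constant sheaf $M$ on $\bar Y_\alpha$, carrying a natural $G_\alpha$-equivariant structure, and using $\bar\pi_\alpha\circ j_{Y_\alpha}=j\circ\pi_\alpha$ together with the fact that $\pi_\alpha$ is \'etale $G_\alpha$-Galois over $X$ one checks the clean identity $\bigl((\bar\pi_\alpha)_*\mathcal F_\alpha\bigr)^{G_\alpha}=j_*\underline M$.

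Next I would run the two spectral sequences computing the $G_\alpha$-equivariant cohomology $H^\ast_{G_\alpha}(\bar Y_\alpha,\mathcal F_\alpha)$. The first is $H^a\bigl(G_\alpha,H^b(\bar Y_\alpha,\mathcal F_\alpha)\bigr)\Rightarrow H^{a+b}_{G_\alpha}(\bar Y_\alpha,\mathcal F_\alpha)$, and the second is the local-to-global sequence $H^c\bigl(\bar X,R^d q_*\mathcal F_\alpha\bigr)\Rightarrow H^{c+d}_{G_\alpha}(\bar Y_\alpha,\mathcal F_\alpha)$ for the invariant direct image $q_*=\bigl((\bar\pi_\alpha)_*-\bigr)^{G_\alpha}$. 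Here the good compactification enters decisively: by \cite[Theorem~5.4(a)]{KS10} the cover $\bar Y_\alpha\to\bar X$ is numerically tame, so at a geometric point of $\bar X\setminus X$ the stalk of $R^d q_*\mathcal F_\alpha$ is computed by the cohomology of the (prime-to-$p$) inertia group with coefficients in an $\F_p$-module, and this vanishes for $d>0$; over $X$ the map is \'etale, so $R^{>0}q_*\mathcal F_\alpha=0$ there too. Hence the second spectral sequence degenerates and identifies $H^\ast_{G_\alpha}(\bar Y_\alpha,\mathcal F_\alpha)$ with $H^\ast\bigl(\bar X,((\bar\pi_\alpha)_*\mathcal F_\alpha)^{G_\alpha}\bigr)=H^\ast(\bar X,j_*\underline M)$. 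This is the degeneration referred to in the introduction; feeding it into the first spectral sequence produces, for each $\alpha$, a spectral sequence $E_2^{a,b}(\alpha)=H^a\bigl(G_\alpha,H^b(\bar Y_\alpha,\mathcal F_\alpha)\bigr)\Rightarrow H^{a+b}(\bar X,j_*\underline M)$.

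Finally I would pass to the direct limit over $\alpha$, which is exact and compatible with the abutment (the transition maps on $H^\ast(\bar X,j_*\underline M)$ being the identity, while those on $E_2^{\ast,0}$ are the inflation maps), so the limiting spectral sequence still converges to $H^\ast(\bar X,j_*\underline M)$. In the bottom row $\varinjlim_\alpha E_2^{a,0}(\alpha)=\varinjlim_\alpha H^a(G_\alpha,M)=H^a(\pi^t,M)$, since $H^0(\bar Y_\alpha,\mathcal F_\alpha)=M$. The essential input is that the $b=1$ row dies in the limit: as $\bar Y_\alpha$ is normal one has $H^1(\bar Y_\alpha,\mathcal F_\alpha)=H^1(\bar Y_\alpha,\F_p)\otimes_{\F_p}M=\Hom(\pi_1(\bar Y_\alpha),\F_p)\otimes_{\F_p}M$, and any \'etale $\Z/p$-cover $\bar Z\to\bar Y_\alpha$ restricts to an \'etale cover of $Y_\alpha$ whose composite with $\pi_\alpha$ is again tame (tameness is transitive for the discretely valued extensions involved, as recalled in Section~2), hence is dominated by some $\bar Y_\beta$; the defining class then pulls back to zero in $H^1(\bar Y_\beta,\F_p)$, so $\varinjlim_\alpha H^1(\bar Y_\alpha,\mathcal F_\alpha)=0$ and $\varinjlim_\alpha E_2^{0,1}(\alpha)=0$. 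In a first-quadrant spectral sequence the only differential reaching $E^{2,0}$ is $d_2\colon E_2^{0,1}\to E_2^{2,0}$, so in the limit $E_\infty^{2,0}=E_2^{2,0}=H^2(\pi^t,M)$, and the edge inclusion $E_\infty^{2,0}\hookrightarrow H^2(\bar X,j_*\underline M)$ is the desired injective map. The main obstacle is precisely the degeneration of the invariant-pushforward spectral sequence: it is here that numerical tameness, and hence the normal crossings good compactification, is indispensable, since for $\F=\F_p$ a merely tame but not numerically tame cover would leave a nonzero $R^{>0}q_*$ and break the identification with $H^\ast(\bar X,j_*\underline M)$.
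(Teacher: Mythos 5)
Your proposal is correct and takes essentially the same approach as the paper: the same tower of finite tame Galois covers with their normalized compactifications, the same appeal to numerical tameness via \cite[Theorem~5.4(a)]{KS10} and Grothendieck's equivariant spectral sequences to identify the equivariant cohomology with $H^*(\bar X, j_*\underline{M})$, and the same mechanism of killing the $H^1$-row in the colimit (classes correspond to $\Z/p$-covers of $\bar X_U$ whose restrictions to $X$ are again tame, hence die further up the tower) to extract the injection on $H^2$. The only cosmetic difference is that the paper writes down the five-term exact sequence for each cover and then passes to the colimit, whereas you first take the colimit of the spectral sequences and then use the edge inclusion $E_\infty^{2,0}\hookrightarrow H^2(\bar X, j_*\underline{M})$.
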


\begin{proof}
For each open normal  subgroup $U \lhd_o \pi^t$
we define the fibre square
\ga{}{\xymatrix{ \ar[d]_{h_U} X_U \ar[r]^{j_U} & \bar X_U \ar[d]^{\bar h_U}\\
X\ar[r]^j& \bar X
} \notag}
where $h_U$ is the tame Galois cover defined by $\pi^t\to G_U=\pi^t/U$ and $\bar X_U$ is the normalization of $\bar X$ in the field of functions $k(X_U)$ of $X_U$. 
 Then $j_{U*} \underline{M}^U$ is a $G_U$-equivariant constructible sheaf on $\bar X_U$.  By \cite[Theorem]{Suw79}, Grothendieck's spectral sequence \cite[Th\'eor\`eme~5.2.1]{Gro57} 
\ga{}{E_2^{ab}=H^a(G_U, H^b(\bar X_U,j_{U*} \underline{M}^U)) \Rightarrow 
H^{a+b}(\bar X_U, G_U, j_{U*} \underline{M}^U).\notag
 }
exists where on the right stands equivariant cohomology.  By definition for $U\subset {\rm Ker}( \pi^t\to GL(M))$,  $\underline{M}^U= h_U^*\underline{M}$ thus
\ga{a}{ \bar h^G_{U*} j_{U*} \underline{M}^U=j_* \underline{M}, }
 where $\bar h^G_{U*}$ is the $G$-invariant direct image functor.

As $j$ is a normal crossings compactification, $\bar h_U$ is numerically tame in the sense of \cite[Section~5]{KS10}, see \cite[Theorem~5.4 (a)]{KS10}.  
Thus by \cite[Corollaire p.204]{Gro57} one has 
\ga{b}{ R^{>0}   \bar h^G_{U*} j_{U*} \underline{M}^U=0 , \\
H^{n}(\bar X_U, G_U, j_{U*} \underline{M}^U)=H^n(\bar X, j_*\underline{M}). \notag
}
We conclude that the spectral sequence $E_2^{ab}$ yields for each $U$ a short exact sequence 
\ga{seq}{ H^0(G_U, H^1(\bar X_U, j_{U*} \underline{M}^U)) \to H^2(G_U, H^0(\bar X_U,j_{U*} \underline{M}^U)) \to H^2(\bar X, j_*\underline{M})\notag}
which is compatible with the restriction maps as we make $U$ smaller.
For $U$ contained in ${\rm Ker} \big(\pi_1^t(X)\to GL(M)\big)$, we have 
$H^1(\bar X_U, j_{U*} \underline{M}^U)=H^1(\bar X_U, \F_p)\otimes_{\F_p} M$ where on the right $\otimes_{\F}M$ means the $\F_p$-vector space as a $G_U$-representation.  There is an open (normal if we wish) subgroup $\bar V \lhd_o \pi_1(\bar X_U)$ such that the induced map $H^1(\bar X_U, \F_p) \to 
H^1((\bar X_U)_{\bar V}, \F_p)$ is equal to $0$. We take  $V\lhd_o \pi^t$ open normal  and contained in the inverse image of $\bar V$ under the surjection $\pi_1^t(X_U)\to \pi_1(\bar X_U)$.  Then the map 
$H^1(\bar X_U, j_{U*} \underline{M}^U)\to H^1(\bar X_V, j_{V*} \underline{M}^V)$ is equal to zero. 
 We conclude that 
\ga{}{ \varinjlim_U H^2(G_U, M^U)\to H^2(\bar X, j_*\underline{M}) \notag}
is injective. 
By \cite[Proposition~1.2.5] {NSW13} we have 
\ga{2}{ H^2(\pi^t, M)=\varinjlim_U H^2(G_U, M^U) \notag}
where $U$ ranges over all open normal subgroups of $\pi^t$ and $G_U=\pi^t/U$. This finishes the proof.

\end{proof}

\begin{proof}[Proof of Theorem~\ref{thm:main}] 
As $j$ is a normal crossings compactification, we apply \cite[Theorem~1.1 b)]{EK16} and \cite[X, p.90]{SGA2} to reduce to the case of dimension equal to $2$. 
We fix a linear representation $M$ of rank $r$ of $\pi^t$ over $\F_p$.
For $U={\rm Ker}(\pi^t\to GL(M))$ we define $h_U, \bar h_U$ as in the proof of 
Proposition~\ref{prop:numtame}. Then $h_U^*M$ is a trivial representation of $U$. We set $L= H^0(X_U,  h_U^*\underline{M})$ for its $r$-dimensional vector space of global sections. We define $\sL=L\otimes_{\F_p} \sO_{\bar X_U}$, a locally free sheaf of rank $r$  which  admits an action of the Frobenius $F_U$ on $\bar X_U$ with invariant sections $L$. The Artin-Schreier sequence on $(\bar X_U)_{\rm \acute{e}t}$ 
\ga{}{0\to \underline{L} \to \sL \xrightarrow{1\otimes F_U  -1} \sL\to 0 \notag}
is  an exact sequence of $G_U$-equivariant sheaves.  By definition, we have   $\underline{L}=j_{U*} h_U^*\underline{M}$. 
Applying \eqref{a} and \eqref{b}  we obtain the induced exact sequence 
\ga{}{0\to j_*\underline{M}\to \sM\xrightarrow{\psi} \sM\to 0\notag}
on $\bar X_{\rm \acute{e}t}$ 
where $\sM= \bar h^{G_U}_{U*}\sL$, $\psi= \bar h^{G_U}_{U*}( 1\otimes F_U-1).$
On the other hand,
 the map $\psi$ is of the form $\psi_F-1$, where $\psi_F=\bar h^{G_U}_{U*}(1\otimes F_U)$ is a $p$-linear endomorphism of $\sM$. Since
$H^1(\bar X,\sM)$ is finite dimensional over $k$, we have that 
\[\psi_F-1:H^1(\bar X,\sM)\to H^1(\bar X,\sM)\]
is surjective, by \cite[XXII, Proposition~1.2]{SGA7.2}. Similarly the finite dimensionality of $H^2(\bar X,\sM)$ and \cite[XXII, (1.0.10) and Proposition~1.1]{SGA7.2}
imply that
\ga{}{H^2(\bar X, j_*\underline{M})\otimes_{\F_p}k\subset H^2(\bar X, \sM)\notag}
and it suffices to bound the dimension over $k$ of the right hand side.

 As $\sL$ is locally free (in fact trivial) of rank $r$,  it is reflexive, thus $\sM$ is reflexive and of generic rank $r$.  As $\bar X$ is smooth and has dimension equal to $2$, $\sM$ is locally free as well, and of rank $r$. By definition
 \ga{}{ \bar h_U^*\sM\subset \sL.\notag}
 On the other hand, one has
 \ga{}{ \sL\otimes_{\sO_{\bar X_U}} \bar h_U^*\sO_{\bar X}(-D)\subset \bar h_U^*\sM\notag}
 where $D$ is the reduced  normal crossings divisor $\bar X
 \setminus X$. Indeed, as both sheaves are locally free, $\bar X_U$ is normal, and $\bar h_U^*\sM=\sL$ on $X_U$, 
  it is enough to check  the inclusion in restriction to $\Delta \to \bar X_U$ where $\Delta={\rm Spec}(R)$, $R$ is a discrete valued ring with residue field $k$, so that $\Delta$ is in good position with respect to $D$. Then  $\bar X_U\times_{\bar X} \Delta\to \Delta$  is a disjoint union of Kummer covers $
T \  \to \Delta,$  where $T={\rm Spec}(S), \ S=R[t]/(t^n-x\cdot u)$,  $u\in R^\times$, $x$ is the uniformizer of $R$, and  $(n,p)=1$, which are permuted by the quotient of $G_U$ by the stabilizer of $T$.    Thus $\sM|_{\Delta}$, as an $R$-module, is of the shape $\oplus_1^r t^{a_i} R$, where $0\le a_i< n$, so $\sM|_{\Delta} \otimes_R  S=\oplus_1^r t^{a_i} S \supset \oplus_1^t t^n S= \sL\otimes_{\sO_{\bar X_U}} \bar h_U^*\sO_{\bar X}(-D)\otimes_{\sO_{\bar X_U}} S.  $ So we conclude.

By  Serre duality $ H^2(X, \sM)=  H^0(\bar X, \sM^\vee \otimes_{\sO_{\bar X}}  \omega_{\bar X})^\vee$ where $\omega_{\bar X}$ is the dualizing sheaf of $\bar X$. 
Let $\sA$ be a very ample line bundle on $\bar X$ such that 
 $\sA\otimes_{\sO_{\bar X}}\omega_{\bar X}(D)=\sB$ is very ample as well. 
  Let $C$  and $C'$ be  smooth curves on $\bar X$ such that 
  $\sB=\sO_{\bar X}(C)=\sO_{\bar X}(C')$, and $C, C'$ intersect transversally and are both in good position with respect to $D$. Then  
  \ml{}{
 \bar h_U^*(\sM^\vee \otimes_{\sO_{\bar X}} \omega_{\bar X}(-C))\subset \\
\sL^{\vee}\otimes_{\sO_{\bar X_U}} 
\bar h_U^*(\omega_{\bar X}(D-C))=\sL^{\vee}\otimes_{\sO_{\bar X_U}}\bar h_U^*\sA^{-1}. \notag}
Since $\sL$ is a trivial vector bundle, and by choice, $\omega_{\bar X}^{\vee}(C-D)=\sA$ is very ample on $\bar{X}$, we deduce that 
\ml{}{ H^0(\bar X_U,   \bar h_U^*(\sM^\vee \otimes_{\sO_{\bar X_U}} \omega_{\bar X}(-C)))=\\
H^0(\bar X_U\times_{\bar X}  C,   \bar h_U^*(\sM^\vee \otimes_{\sO_{\bar X_U}} \omega_{\bar X}(-C')))
=
0\notag}
which implies
\ga{}{ H^0(\bar X, \sM^\vee \otimes_{\sO_{\bar X}} \omega_{\bar X}(-C))= H^0(C, \sM^\vee \otimes_{\sO_{\bar X}} \omega_{\bar X}(-C')|_C)=
0. \notag}
So  the restriction maps
\ml{}{ H^0(\bar X, \sM^\vee\otimes_{\sO_{\bar X}} \omega_{\bar X})  \to 
H^0(C, (\sM^\vee\otimes_{\sO_X} \omega_X)|_C) \\ \to H^0(C\cap C', (\sM^\vee\otimes_{\sO_X} \omega_X)|_{C\cap C'}) 
\notag}
 are injective, out of which we conclude 
 \ga{}{{\rm dim}_{k} H^0(\bar X, \sM^\vee\otimes_{\sO_{\bar X}} \omega_{\bar X}) \le r\cdot c_1(\sB)^2.  \notag }
 This finishes the proof.

\end{proof}

\section{Remarks} \label{sec:rmk}

\subsection{Perfect fields} Theorem~\ref{thm:main}  remains true if we relax the assumption on $k$ being algebraically closed under the following assumptions:
\begin{itemize}
\item[(i)] $k$ is perfect;
\item[(ii)] the Galois group $G_k$ of $k$ is finitely presented. 

\end{itemize}
For example, $k$ could be a finite field. 
\begin{proof}
We have the homotopy exact sequence 
\ga{}{1\to \pi_1(X_{\bar k})\to \pi_1(X)\to G_k\to 1\notag}
\cite[Th\'eor\`eme~9.6.1]{SGA1}. On the other hand, tameness is preserved by base change thus the composite  $\pi_1(X_{\bar k}) \to \pi_1(X)\to \pi_1^t(X)$ factors through $\pi_1^t(X_{\bar k}) \to \pi_1^t(X)$. 
\begin{claim} \label{claim:left}
$\pi_1^t(X_{\bar k}) \to \pi_1^t(X)$ is injective. 

\end{claim}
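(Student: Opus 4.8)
The plan is to deduce injectivity from a descent statement for tame covers, using the elementary profinite criterion: a continuous homomorphism $\phi\colon G\to H$ of profinite groups is injective as soon as for every open normal subgroup $N\lhd_o G$ there is an open subgroup $U\le_o H$ with $\phi^{-1}(U)\subseteq N$. Indeed $\ker\phi\subseteq\phi^{-1}(U)\subseteq N$ for every such $N$, whence $\ker\phi\subseteq\bigcap_{N\lhd_o G}N=1$. Applying this to $\phi\colon\pi_1^t(X_{\bar k})\to\pi_1^t(X)$, an open normal $N$ corresponds to a connected finite tame Galois cover $Y'\to X_{\bar k}$ (based at $x$), and it suffices to produce a finite tame \'etale cover $Z\to X$ together with a geometric point over $x$, such that the connected component of the base change $Z_{\bar k}$ through $x$ is exactly $Y'$; then $U=\mathrm{Stab}_{\pi_1^t(X)}(x)=\pi_1^t(Z)$ satisfies $\phi^{-1}(U)=\pi_1^t(Y')=N$.

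First I would descend $Y'$ to a finite extension. Since $k$ is perfect, every finite subextension $k\subseteq k'\subseteq\bar k$ is separable, so $X_{k'}\to X$ is finite \'etale and $X_{\bar k}=\varprojlim_{k'}X_{k'}$. By the standard limit formalism for finite \'etale covers (the category $\mathrm{FEt}(X_{\bar k})$ is the filtered $2$-colimit of the categories $\mathrm{FEt}(X_{k'})$), the cover $Y'$ descends to a finite \'etale cover $Y'_{k'}\to X_{k'}$ for some finite $k'/k$, with $(Y'_{k'})_{\bar k}\cong Y'$; it is connected because its base change is.

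Next I would check that the composite $Z:=Y'_{k'}\to X_{k'}\to X$ is a finite tame \'etale cover of $X$. The map $X_{k'}\to X$ is finite \'etale, hence tame, and a composite of tame covers is tame, so it is enough to see that $Y'_{k'}\to X_{k'}$ is tame. Here I use perfectness of $k$: the good compactifications $\bar X_{k'}$ and $\bar X_{\bar k}$ are obtained by base change, the boundary stays a normal crossings divisor, it remains geometrically reduced, and at each codimension-one point of the boundary the extension of discrete valuation rings induced by $\bar k/k'$ is unramified (a local equation of the boundary is still a local equation after the separable base change) with separable residue extension. Consequently the ramification index and residue-field extension of $Y'_{k'}\to X_{k'}$ at a boundary point coincide with those of $Y'=(Y'_{k'})_{\bar k}\to X_{\bar k}$, so tameness of the latter forces tameness of the former. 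This reflection of tameness under the field extension is the main technical point of the argument.

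Finally I would verify that the component of $Z_{\bar k}$ through $x$ is $Y'$. Writing $k'\otimes_k\bar k\cong\prod_{\sigma\colon k'\hookrightarrow\bar k}\bar k$, one gets $Z_{\bar k}=\coprod_\sigma(Y'_{k'})_{\bar k,\sigma}$, and the factor attached to the structural embedding is exactly the connected cover $Y'$; the base point $x$ lies in this factor since $Y'\to X_{\bar k}$ is based. Hence $\phi^{-1}(\pi_1^t(Z))=\mathrm{Stab}_{\pi_1^t(X_{\bar k})}(x)=\pi_1^t(Y')=N$, so the criterion applies with $U=\pi_1^t(Z)$ for every $N$, and $\phi$ is injective.
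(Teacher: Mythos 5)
Your proof is correct and takes essentially the same route as the paper: both arguments reduce injectivity to showing that every connected tame cover $Y'$ of $X_{\bar k}$ arises as a connected component of the base change of a connected tame cover of $X$ (your stabilizer criterion is just the group-theoretic translation of the paper's sub-representation criterion, Claim~\ref{claim:crit}), and both produce that cover by descending $Y'$ to a finite subextension $k'/k$, which is \'etale because $k$ is perfect, and taking the composite $Y'_{k'}\to X_{k'}\to X$. If anything, your write-up makes explicit two points the paper leaves implicit, namely that the factorization must exhibit $Y'$ as a full component (injectivity on fibers) and that tameness of $Y'_{k'}\to X_{k'}$ is reflected from its base change along $\bar k/k'$.
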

To check this we have  a criterion. 
\begin{claim} \label{claim:crit}
$\pi_1^t(X_{\bar k})\to \pi_1^t(X)$  is injective if and only if for any $ Y \to X_{\bar k}$ finite \'etale tame connected, there is $Z \to X$ finite \'etale tame connected such that one has a factorization
\ga{}{\xymatrix{\ar[d] Y \ar@{.>}[r]^{\exists } & Z \ar[d]\\
X_{\bar k} \ar[r] & X
} \notag}
\end{claim}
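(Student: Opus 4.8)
The plan is to deduce Claim~\ref{claim:crit} from the standard dictionary (\cite[Expos\'e~V]{SGA1}) between exact functors of Galois categories and continuous homomorphisms of their fundamental groups. First I would record that both $\pi_1^t(X)$ and $\pi_1^t(X_{\bar k})$ are the fundamental groups of the Galois categories of tame finite \'etale covers, by \cite[Theorem~4.4]{KS10}, and that base change along $X_{\bar k}\to X$ defines an exact functor $F\colon Z\mapsto Z\times_X X_{\bar k}$ between them: tameness is preserved by base change, as already noted above, so $F$ does land in tame covers. Choosing the geometric point of $X_{\bar k}$ lying over the one on $X$ makes the two fibre functors compatible, so $F$ induces precisely the map $f\colon \pi_1^t(X_{\bar k})\to \pi_1^t(X)$ whose injectivity is at issue. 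This reduces Claim~\ref{claim:crit} to a purely categorical statement about $f$.

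Next I would translate injectivity of $f$ into the language of covers through the identity ${\rm Ker}(f)=\bigcap_{U\leq_o \pi_1^t(X)} f^{-1}(U)$. Since $\pi_1^t(X_{\bar k})$ is profinite, $f$ is injective if and only if the subgroups $f^{-1}(U)$ are cofinal among the open subgroups of $\pi_1^t(X_{\bar k})$, i.e. if and only if for every open $V\leq_o \pi_1^t(X_{\bar k})$ there is an open $U\leq_o\pi_1^t(X)$ with $f^{-1}(U)\subseteq V$. Under the Galois correspondence an open $V$ corresponds to a connected tame cover $Y\to X_{\bar k}$, an open $U$ to a connected tame cover $Z\to X$ (which I may take Galois by passing to the normal core of $U$), and $f^{-1}(U)$ is exactly the stabiliser of the connected component $W$ of $Z\times_X X_{\bar k}$ through the basepoint. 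Thus the cofinality condition says precisely that $W$ dominates $Y$, and enlarging $Z$ so that $Y$ occurs \emph{as} the component $W$ produces the map $Y\hookrightarrow Z\times_X X_{\bar k}\to Z$ over $X$ displayed in the claim. The easy ``if'' direction is then immediate: given $1\neq\gamma\in\pi_1^t(X_{\bar k})$, choose $V$ with $\gamma\notin V$ and $Y$ its cover; realising $Y$ as a component of some $Z\times_X X_{\bar k}$ forces $V=f^{-1}(U)$, whence $f(\gamma)\notin U$ and $f(\gamma)\neq 1$, so ${\rm Ker}(f)=1$.

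The step I expect to be most delicate is matching the \emph{direction} of the arrow in the statement with the group theory, namely upgrading ``$W$ dominates $Y$'' to ``$Y$ is isomorphic to a connected component of $Z\times_X X_{\bar k}$,'' which is what makes the map point from $Y$ to $Z$ rather than the other way. For the ``only if'' direction this amounts to realising the open subgroup $V$ exactly as some $f^{-1}(U)$, not merely containing one; geometrically this is the assertion that the connected tame cover $Y\to X_{\bar k}$ descends to a cover $Y'\to X_{k'}$ over a finite subextension $k'\subset\bar k$, which is possible since $X$ is of finite type and $k$ is perfect. Taking $Z:=Y'$, viewed as a tame cover of $X$ through the finite \'etale map $X_{k'}\to X$, one checks that $Y$ is one of the $[k':k]$ connected components of $Z\times_X X_{\bar k}$. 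The bookkeeping points to be careful about are that this descended $Z$ is connected and again tame over $X$ (using $k'/k$ separable, hence $X_{k'}\to X$ \'etale), and the conjugacy ambiguity in the correspondences $V\leftrightarrow Y$ and $U\leftrightarrow Z$ together with the compatibility of basepoints; the remaining cofinality and kernel computations are then formal.
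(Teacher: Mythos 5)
Your proposal is correct, and its backbone is the same Galois-category/fibre-functor dictionary that the paper itself uses: your first two paragraphs spell out in detail what the paper compresses into a single sentence, namely that injectivity of $\pi_1^t(X_{\bar k})\to\pi_1^t(X)$ is equivalent to every fibre $\sigma^{-1}(x)$ being a sub-$\mathrm{Aut}(\omega_x)$-representation of some restricted $\pi^{-1}(x)$. You also correctly isolate the one point where the statement must be read with care: the dotted arrow $Y\to Z$ has to be interpreted as exhibiting $Y$ as a connected component of $Z\times_X X_{\bar k}$ (equivalently, realizing $\sigma^{-1}(x)$ as a subrepresentation of $\pi^{-1}(x)$), since under the naive reading of the commutative square the choice $Z=X$ always works and the ``if'' direction would be vacuous; this is exactly the ``sub-representation'' formulation in the paper's proof, so your interpretation matches the intended one. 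Where you genuinely diverge is the ``only if'' direction. The paper keeps Claim~\ref{claim:crit} purely formal; the implicit group-theoretic content is that if $f\colon H\to G$ is injective then every open $V\leq_o H$ equals $f^{-1}(U)$ for some open $U\leq_o G$ exactly, not just up to containment (given $U_0$ with $f^{-1}(U_0)\subseteq V$, replace it by $U_1=\bigcap_{h\in f(V)}hU_0h^{-1}$, an open subgroup normalized by $f(V)$, and take $U=f(V)\cdot U_1$; injectivity then gives $f^{-1}(U)=V$). You instead prove the right-hand side of the equivalence outright, by descending $Y$ to $X_{k'}$ for a finite separable $k'/k$ --- but that is word-for-word the paper's proof of the \emph{subsequent} Claim~\ref{claim:left}, i.e.\ of the verification of the criterion, not of the criterion itself. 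Your route is logically sound (proving the criterion unconditionally makes ``injective $\Rightarrow$ criterion'' vacuous), but it merges the criterion with its application: your version of Claim~\ref{claim:crit} is no longer a formal statement about a map of tame fundamental groups, since it imports the hypotheses that $k$ is perfect and $X$ of finite type, whereas the paper's division of labour keeps all the geometry inside Claim~\ref{claim:left}. If you replace the descent argument by the group-theoretic upgrade indicated above (or by the corresponding criterion from SGA1, Expos\'e~V), your first two paragraphs alone give a complete, self-contained proof that is essentially the paper's, with the details filled in.
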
 
\begin{proof} We have the base points $x\in X_{\bar k} \mapsto x\in X$, so 
\ga{}{ \pi_1^t(X_{\bar k})={\rm Aut}(\omega_x), \omega_x: {\rm Rev}^t(X_{\bar k}) \to {\rm Sets}, (\sigma: Y\to X_{\bar k})\mapsto   \sigma^{-1}(x) \notag \\
\pi_1^t(X_t)={\rm Aut}(\omega_x), \omega_x: {\rm Rev}^t(X)\to {\rm Sets}, (\pi: Z\to X)\mapsto   \pi^{-1}(x).\notag}
Here ${\rm Rev}^t(X_{\bar k}) $ resp. ${\rm Rev}^t(X) $ denotes the category of finite \'etale  tame covers, and Sets the one of sets.
 So $\pi_1^t(X_{\bar k})\to \pi_1^t(X)$ injective is equivalent to the  ${\rm Aut}(\omega_x)$-representation $\sigma^{-1}(x)$ for each $\sigma$ to be a sub- ${\rm Aut}(\omega_x)$-representation  of a 
${\rm Aut}(\omega_x)$ representation $\pi^{-1}(x)$.

\end{proof}
We have fixed a geometric point $x$ which defines the algebraic closure $k\hookrightarrow \bar k\hookrightarrow k(x)$.  As $k$ is perfect, $\bar k$ is the separable closure thus the surjection 
$\pi_1(X)\to G_k$ factors through $\pi_1^t(X)\to G_k$. 
As $X_{\bar k}$ is of finite type, any $\sigma: Y\to X_{\bar k}$  finite \'etale  tame  is defined over a finite extension $k'\supset k$, so say $Y_{k'}\to X_{k'}$, where $k'\supset k$ is \'etale. Thus 
$Y_{k'}\to X_{k'}\to X$ is finite \'etale tame.  This proves Claim~\ref{claim:left}. 
On the other hand, as the image of $\pi_1(X_{\bar k})\to \pi_1(X)$ is normal, and $\pi_1(X_{\bar k})\to \pi^t_1(X_{\bar k})$ is surjective, the image of $\pi^t_1(X_{\bar k})\to \pi^t_1(X)$ is normal as well. 
 Thus the homotopy exact sequence descends to an exact sequence 
\ga{}{1\to \pi^t_1(X_{\bar k})\to \pi^t_1(X)\to G_k\to 1.\notag}
We now apply Claim~\ref{claim:lub}  (i) to finish the proof.

\end{proof}

\subsection{Analogy with Deligne's canonical extension}
The complex 
\ga{}{\sM \xrightarrow{\psi} \sM \notag}
in the proof of Proposition~\ref{prop:numtame} is analog  on $X_\C$ smooth over $\C$ with a normal crossings divisor compactification $j: X_\C\hookrightarrow \bar X_{\C}$ to the logarithmic de Rham complex $\Omega_{X_\C} ^\bullet({\rm log}( \bar X_\C\setminus X_\C)) \otimes \sM_\C$ where $\sM_\C$ is Deligne's canonical extension associated to a  complex local system $\underline{M}_\C$ \cite[Proposition~.5.2, p.~91]{Del70}.
The property that under the numerical tameness assumption  this complex on $\bar X_{\rm \acute{e}t}$ computes $j_*\underline{M}$ is analog to the  analytic property  on the local monodromies along $ \bar X_\C\setminus X_\C$ which force the condition
 $j_*\underline{M}_\C=Rj_*\underline{M}_\C$.

\end{document}